\setlist{nosep}
\newif\ifcomment
\title{On graphs coverable by chubby shortest paths}
\author{
Meike Hatzel\thanks{Discrete Mathematics Group, Institute for Basic Science (IBS), Daejeon, South Korea. E-mail: \href{research@meikehatzel.com}{research@meikehatzel.com}. Meike Hatzel's research was supported by the Institute for Basic Science (IBS-R029-C1).}
\and
Micha{\l} Pilipczuk\thanks{Institute of Informatics, University of Warsaw, Poland. E-mail: \href{michal.pilipczuk@mimuw.edu.pl}{michal.pilipczuk@mimuw.edu.pl}. Micha\l{} Pilipczuk's research was supported by the project BOBR that has received funding from the European Research Council (ERC) under the European Union’s Horizon 2020 research and innovation programme, grant agreement No. 948057.}}
\date{}
\renewcommand{\phi}{\varphi}
\renewcommand{\hat}{\widehat}
\begin{document}

\maketitle

\begin{abstract}
    Dumas, Foucaud, Perez, and Todinca~[SIAM J.~Disc.~Math., 2024] proved that if the vertex set of a graph $G$ can be covered by $k$ shortest paths, then the pathwidth of $G$ is bounded by $\Oh{k \cdot 3^k}$.
    We prove a coarse variant of this theorem: if in a graph $G$ one can find~$k$ shortest paths such that every vertex is at distance at most $\rho$ from one of them, then $G$ is $(3,12\rho)$-quasi-isometric to a graph of pathwidth $k^{\mathcal{O}(k)}$ and maximum degree $\mathcal{O}(k)$, and $G$ admits a path-partition-decomposition whose bags are coverable by $k^{\mathcal{O}(k)}$ balls of radius at most $2\rho$ and vertices from non-adjacent bags are at distance larger than $2\rho$.
    We also discuss applications of such decompositions in the context of algorithms for finding maximum distance independent sets and minimum distance dominating sets in graphs.
\end{abstract}

\begin{textblock}{20}(-1.7, 6.6)
\includegraphics[width=40px]{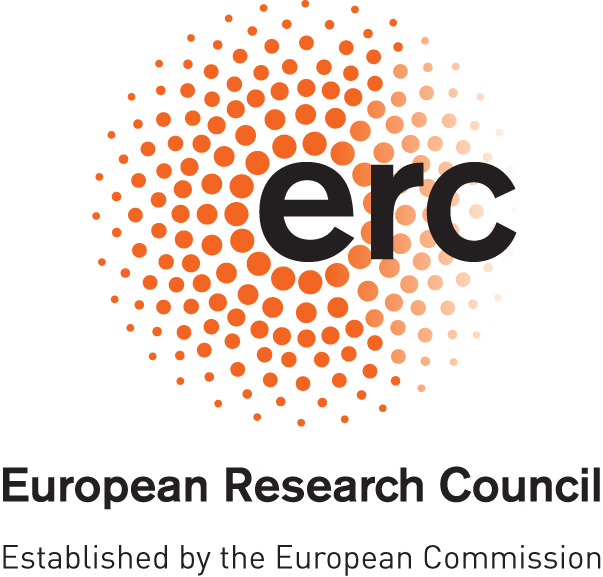}%
\end{textblock}
\begin{textblock}{20}(-1.7, 7.6)
\includegraphics[width=40px]{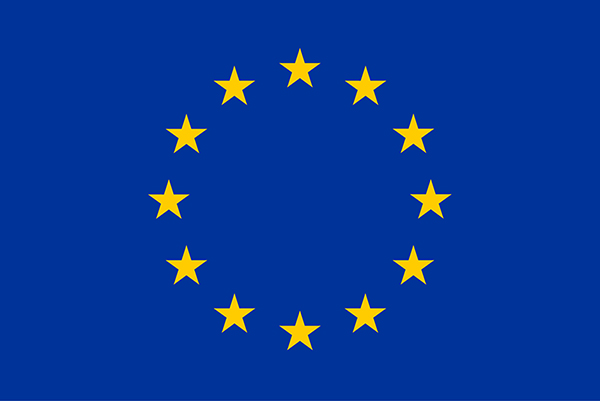}%
\end{textblock}

\section{Introduction}\label{sec:intro}

The aim of \emph{coarse graph theory} is to study the metric structure in graphs. Recently, Georgakopoulous and Papasoglu~\cite{coarse2023} launched a systematic investigation of the coarse counterpart of the theory of Graph Minors, with the following overarching conjecture in mind: If a graph $G$ excludes some fixed graph as a \emph{fat minor}, then the metric structure of $G$ resembles that of a graph excluding a classic minor (there are subtleties regarding this statement, see the discussion in~\cite{DaviesHIM24}). Here, fat minors are a coarse counterpart of classic minors, and the resemblance is measured through the existence of a \emph{quasi-isometry}: a roughly bijective mapping between graphs that roughly preserves distances.

The investigation of the theory of coarse minors naturally leads to studying coarse counterparts of classic graph parameters. The general principle of the coarse theory is that disjointness corresponds to farness, and intersection corresponds to closeness.
Hence, in the coarse counterpart of \treewidth, we would consider tree decompositions where the bags are not of bounded size but can be covered by a bounded number of balls of bounded radius. Very recently, Nguyen, Scott, and Seymour~\cite{coarsetw2025}, and independently Hickingbotham~\cite{hickingbotham2025twquasiisom} proved that the existence of such a tree decomposition is equivalent to being quasi-isometric to a graph of bounded \treewidth, and the natural analogue of this statement also works for pathwidth~\cite{hickingbotham2025twquasiisom}. Also very recently, Abrishami, Czyżewska, Kluk, Pilipczuk, Pilipczuk, and Rzążewski~\cite{abrishami2025coarsetreedecompositionscoarse} studied the connection between such tree decompositions and the existence of balanced separators consisting of a bounded number of bounded-radius~balls.

The drive permeating the recent work on coarse graph theory is to understand which results from classic structural graph theory can be lifted to the coarse setting and to what extent. See~\cite{coarse2023} for a broad overview and~\cite{AhnGHK25,AlbrechtsenHJKW24,BergerS24,DaviesHIM24,DujmovicJMM24,hickingbotham2025twquasiisom,coarsetw2025,NguyenSS25} for examples of individual works of this kind. In this paper, we add one more result to this growing list. Precisely, we examine the following elegant theorem proved recently by Dumas, Foucaud, Perez, and Todinca~\cite{dumas2024geodesics}.

\begin{theorem}[{\hspace{-0.3pt}\cite{dumas2024geodesics}}]
    \label{thm:cover}
    Suppose $G$ is a graph in which one can find a family $\cal P$ of $k$ shortest paths so that every vertex of $G$ belongs to some path of $\cal P$. Then $G$ has pathwidth bounded by $\Oh{k\cdot 3^k}$.
\end{theorem}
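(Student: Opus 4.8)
The plan is to construct a path decomposition of $G$ directly, reading it off from a coordinated ``sweep'' along the $k$ shortest paths. Fix an orientation of each path and write $P_i = v_{i,0}\,v_{i,1}\cdots v_{i,\ell_i}$, calling the index of a vertex on $P_i$ its \emph{position}. The one genuinely metric ingredient is an elementary lemma exploiting that each $P_i$ is isometric in $G$: for every vertex $u$ and every $i$, the set $N[u]\cap V(P_i)$ lies within three consecutive vertices of $P_i$, since two vertices of $P_i$ whose positions differ by at least $3$ are at distance at least $3$ in $G$ and hence cannot have a common neighbour (nor be adjacent). Consequently, if $uw\in E(G)$ and $w=v_{i,q}$, then $q$ lies in a length-$\le 2$ window depending only on $u$. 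Setting $d_i(u):=d_G(v_{i,0},u)$ gives a single integer that ``locates $u$ on $P_i$'': it equals the position of $u$ when $u\in V(P_i)$, it is within $1$ of $q$ whenever $v_{i,q}\in N[u]$, and $|d_i(u)-d_i(w)|\le 1$ for every edge $uw$. Hence the \emph{profile} $D(u)=(d_1(u),\dots,d_k(u))\in\mathbb{Z}_{\ge 0}^k$ of adjacent vertices differs coordinatewise by at most $1$.

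The bags will be ``thickened boxes'' in profile space. To a state $s=(a_1,\dots,a_k)$ assign $B(s)=\{u : a_i\le d_i(u)\le a_i+c\ \text{for all}\ i\}$ for a suitable constant $c$. Width comes essentially for free: $u$ lies on some $P_i$, where $d_i(u)$ is its exact position, so $u\in B(s)$ confines $u$ to $c+1$ vertices of that $P_i$, whence $|B(s)|=\mathcal{O}(k)$. Every vertex is covered by the state $D(u)$, and by the Lipschitz bound every edge $uw$ is covered by the coordinatewise minimum of $D(u)$ and $D(w)$; moreover the set of states covering a fixed $u$ is exactly the box $\prod_i[d_i(u)-c,\,d_i(u)]$. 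So it suffices to order all these states along a walk in $\mathbb{Z}_{\ge 0}^k$ that changes one coordinate by $\pm 1$ at each step and in which every such box is visited during one contiguous time interval; a coordinatewise-monotone walk does precisely this. The catch — and the heart of the matter — is that profiles need not be comparable: one vertex may sit far along $P_i$ and near the start of $P_j$ while another does the reverse, so their boxes ``cross'' and no single monotone pass can visit both.

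The main obstacle is therefore the scheduling of the sweep, and this is where the factor $3^k$ should enter. I would run the sweep in \emph{phases}: within a phase the state is monotone for a phase-dependent choice of direction on each coordinate, and between phases some coordinates are allowed to back up so as to re-enter a region of profile space the previous phase could not reach; before this one should reorient some of the paths so as to make profiles comparable as far as is globally possible. The data describing a phase is, per path, essentially a choice among ``swept forward'', ``swept backward'', or ``parked'', giving $\mathcal{O}(3^k)$ phase types; enlarging the boxes by a constant per phase keeps each box's visit contiguous across the intervening re-alignments, and placing the $\mathcal{O}(k)$-size frontier of each active phase into the current bag yields width $k^{\mathcal{O}(1)}\cdot 3^k$. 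The delicate point I expect to have to nail down is the global consistency of the orientation/phase patterns — that crossing boxes can in fact be untangled by a bounded amount of reorientation. The lever here should be the metric inequality $\bigl|\,d_G(u,w)-|p-p'|\,\bigr|\le 2$, valid whenever $v_{i,p}\in N[u]$ and $v_{i,p'}\in N[w]$ (two applications of the triangle inequality using that $P_i$ is isometric): it forces any two paths that both ``see'' a far-apart pair $u,w$ to either agree or disagree on the order of that pair, and one must argue that this agree/disagree relation is consistent enough to be witnessed by a single reorientation of the $k$ paths together with a bounded schedule of phases.
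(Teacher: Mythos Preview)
This paper does not prove \cref{thm:cover}; the result is quoted from Dumas, Foucaud, Perez, and Todinca, and only its coarse analogue (\cref{thm:main}) is proved here. The introduction does, however, summarise their method, and the proof of \cref{thm:main} follows it, so one can compare. Their route is: fix a single root $v$, take BFS layers $S_d=\{u:\dist{}{v,u}=d\}$, and bound $|S_d|$ by $O(k\cdot 3^k)$ by classifying shortest $v$--$u$ paths into \emph{types} --- the sequence of geodesics of $\mathcal{P}$ the path traverses, with directions --- and arguing that two paths of the same type and length must end at the same vertex. The path decomposition is then just consecutive layers, so the linear order is the single scalar $d$ and there is nothing to schedule.

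Your approach is genuinely different, and your metric lemmas (profiles are edge-Lipschitz; each box $B(s)$ has $O(kc)$ vertices) are correct. But there is a real gap, exactly where you flag it: the phase schedule. You bound the number of phase \emph{types} by $3^k$, but nowhere bound the number of phases actually needed, and the ``agree/disagree'' consistency you gesture at is asserted rather than proved. Axis-aligned boxes in $\mathbb{Z}^k$ do not in general admit a linear ordering in which each box occupies an interval, so some specific structural fact about \emph{these particular} boxes must carry the argument --- and that unstated fact is then doing all the work. The Dumas et al.\ proof finesses the whole issue by projecting onto a single coordinate (distance from one root) and absorbing the remaining $k-1$ paths into the type classification; that is where the $3^k$ legitimately enters, rather than as an unproven scheduling bound. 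If you want to salvage your framework, the cleanest fix is to drop the multi-coordinate sweep and observe that your $d_1(\cdot)$ alone already orders the bags; what you then need is that each slice $\{u:d_1(u)=d\}$ is small --- which is precisely the type lemma.
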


The main contribution of this work is the following coarse counterpart of \cref{thm:cover}. Here, for positive integers $k,\rho$, we call a graph $G$ \emph{$(k,\rho)$-geodesic-coverable} if in $G$ there is a family $\cal P$ of $k$ shortest paths so that every vertex is at distance at most $\rho$ from some path of $\cal P$.

\begin{restatable}{theorem}{thmMain}\label{thm:main}
    Suppose $G$ is a $(k,\rho)$-geodesic-coverable graph, for some positive integers $k,\rho$. Then $G$ is $(3,12\rho)$-quasi-isometric to a graph of pathwidth at most $k^{\Oh{k}}$ and maximum degree less than~$26k$. Furthermore, the vertices of $G$ can be partitioned into parts $C_1, C_2, \ldots, C_\ell$ so that every part can be covered by $k^{\Oh{k}}$ balls of radius $2\rho$, and every two vertices at distance at most $2\rho$ belong to the same part or to two consecutive parts.
\end{restatable}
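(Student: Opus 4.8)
The plan is to deduce everything from a coarser combinatorial model of $G$. Fix a family $\mathcal{P}=\{P_1,\dots,P_k\}$ of shortest paths witnessing $(k,\rho)$-geodesic-coverability and parametrise each $P_i$ by position along it, so that two vertices of $P_i$ at positions $x,y$ lie at distance $|x-y|$ in $G$. Cut each $P_i$ into consecutive \emph{chunks} of length $\Theta(\rho)$; let $H_0$ be the graph whose vertices are all chunks, with an edge between consecutive chunks of a common path and between two chunks from different paths containing vertices at distance $\mathcal{O}(\rho)$ in $G$; and let $\pi_i$ be the path in $H_0$ formed by the chunks of $P_i$ in order. Define $\varphi\colon V(G)\to V(H_0)$ by sending $v$ to a chunk containing a vertex of $\bigcup_i P_i$ closest to $v$ (which is at distance at most $\rho$). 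I would then show that $H_0$ has pathwidth $k^{\mathcal{O}(k)}$ and maximum degree below $26k$, that the graph $H$ obtained from $H_0$ by subdividing every edge $\Theta(\rho)$ times (which affects neither bound) is $(3,12\rho)$-quasi-isometric to $G$ via $\varphi$, and — as a by-product of the pathwidth argument — that $G$ admits the promised path-partition.

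The geometric input is a single observation: for a shortest path $P$ and a vertex $v$, the vertices of $P$ at distance at most $r$ from $v$ occupy at most $2r$ consecutive positions, because any two of them are within $2r$ of each other and their distance equals their position-difference. Two consequences. First, $d_G(u,w)\le 2\rho$ implies $\varphi(u)$ and $\varphi(w)$ are equal or adjacent in $H_0$, and, for a fixed chunk $c$ of $P_i$, the chunks of any other $P_j$ adjacent to $c$ form an interval of $\mathcal{O}(1)$ consecutive chunks; a careful count then gives $\deg_{H_0}(c)\le 2+\mathcal{O}(1)\cdot(k-1)<26k$. Second — the structural fact underpinning the pathwidth bound — the edges of $H_0$ between $\pi_i$ and $\pi_j$ lie in a band of bounded width around a monotone partial correspondence between the positions on $P_i$ and on $P_j$: wherever $P_i$ and $P_j$ come close they run roughly parallel, with a single orientation and a single offset. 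This is again the triangle inequality, applied to closeness witnessed at two places.

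The heart of the proof is to bound the pathwidth of $H_0$ by $k^{\mathcal{O}(k)}$ while simultaneously producing a \emph{timestamp} $\tau\colon V(H_0)\to\mathbb{Z}$ that is monotone along each $\pi_i$ and changes by $\mathcal{O}(1)$ across every edge of $H_0$. Since every chunk lies on some $\pi_i$, the graph $H_0$ is covered by the $k$ paths $\pi_1,\dots,\pi_k$, each of which is a quasi-geodesic in $H_0$ — a shortcut between far chunks of $\pi_i$ would pull back along $\varphi$ to a shortcut between far vertices of the geodesic $P_i$. So this is a quasi-geodesic analogue of \cref{thm:cover}, which I would prove by adapting the argument behind \cref{thm:cover} by induction on $k$: deleting $\pi_k$, the band structure shows that $\pi_k$ re-attaches to the rest along $\mathcal{O}(1)$ parallel stretches, each threadable into the path decomposition produced for $\pi_1,\dots,\pi_{k-1}$ at the cost of multiplying the width by $\mathcal{O}(k)$ (this step contributes the factor $3$ per path in the exact setting of \cref{thm:cover}, but $\mathcal{O}(k)$ per path in the coarse setting, whence $k^{\mathcal{O}(k)}$), while $\tau$ is extended along $\pi_k$ in the direction and with the offset prescribed by those stretches. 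One then takes $C_t:=\varphi^{-1}\!\big(\tau^{-1}(t)\big)$, regrouped into blocks of $\mathcal{O}(1)$ consecutive values of $\tau$: each $\varphi$-fibre of a chunk lies in a ball of radius $2\rho$ and each block meets $k^{\mathcal{O}(k)}$ chunks (the fibres of $\tau$ are bounded since $\tau$ increases genuinely along each $\pi_i$), so every $C_t$ is covered by $k^{\mathcal{O}(k)}$ balls of radius $2\rho$; and two vertices at distance $\le 2\rho$ map to equal or adjacent chunks, hence to timestamps within $\mathcal{O}(1)$, hence to the same or consecutive blocks. I expect the global consistency of $\tau$ to be the main obstacle: the parallel stretches must fit together so that, around every cycle in the nearness pattern of $P_1,\dots,P_k$, the accumulated offsets cancel. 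This is a cocycle-type condition to be extracted from the geometry — for a long cycle covered by two arcs it forces $\tau$ to fold the cycle onto itself consistently at both contact regions, which works because the band lemma makes the two arcs span those contacts with the same length — and the general case requires the band lemma to pin down all offsets simultaneously.

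The quasi-isometry is then routine bookkeeping: on $H$, with its length-$\Theta(\rho)$ subdivided edges, $\varphi$ distorts distances by a bounded multiplicative factor and an additive $\mathcal{O}(\rho)$, and $\varphi(V(G))$ is $\mathcal{O}(\rho)$-dense in $H$; tuning the chunk length and the cross-path distance threshold so that these losses are at most $3$ and $12\rho$ yields the stated $(3,12\rho)$-quasi-isometry. If $G$ is disconnected, one runs the argument on each component.
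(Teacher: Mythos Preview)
Your chunk graph $H_0$, the degree bound, and the quasi-isometry via $\varphi$ are essentially what the paper does (with a maximal distance-$4\rho$ independent set in place of chunks). The gap is in the timestamp $\tau$. You want $\tau$ monotone along each $\pi_i$ and $\mathcal{O}(1)$-Lipschitz across edges of $H_0$, so that fibre-boundedness comes for free; but monotonicity forces the pairwise offsets among the $P_i$ to satisfy a global cocycle condition, and your treatment of this is a sketch for two arcs plus the hope that ``the band lemma pins down all offsets simultaneously.'' It does not, at least not without substantial further argument: when $\pi_k$ attaches at several places to a subgraph whose timestamps were fixed in earlier rounds of the induction, the offsets dictated at those places can conflict, and your outline has no mechanism to reconcile them. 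Since your path-partition $C_t$ and your pathwidth bound both rest on $\tau$, this is the heart of the proof, and it is missing.

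The paper bypasses the cocycle problem entirely by choosing a timestamp that is globally consistent by construction: fix any root $u$ and let $\tau(v)$ be the distance from $u$ to $v$ in $G$. This is $1$-Lipschitz, so slicing into blocks of $2\rho$ consecutive BFS layers immediately gives the path-partition with the ``same or consecutive parts'' property, and the path decomposition of the distance graph is just four consecutive blocks at a time. The price is that distance from $u$ is \emph{not} monotone along the $P_i$, so the fibre bound --- each BFS layer is $(k^{\mathcal{O}(k)},2\rho)$-coverable --- must be proved directly. The paper does this via a \emph{snap-path} argument: every shortest path from $u$ is deformed, at additive cost $\mathcal{O}(k\rho)$, into at most $k$ subpaths of distinct $P_i$'s joined by short connectors; one classifies these by \emph{type} (the ordered sequence of $P_i$'s with directions, of which there are $k^{\mathcal{O}(k)}$) and shows that two snap-paths from $u$ of the same type and nearly the same length end within $k^{\mathcal{O}(k)}\cdot\rho$ of each other. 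So the fix is not to resolve the cocycle but to drop monotonicity, take the BFS timestamp, and pay for the fibre bound with this separate structural argument.
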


The proof of~\cref{thm:main} follows the strategy proposed by Dumas et al.~\cite{dumas2024geodesics} and lifts it naturally to the coarse setting. Namely, we fix a root vertex $v$ and partition the graph into \textsl{breadth first search} (BFS) layers starting from $v$. The key technical statement is the following: every BFS layer $S$ can be covered by $k^{\Oh{k}}$ balls of radius $2\rho$. Inspired by~\cite{dumas2024geodesics}, the idea here is to classify the shortest paths from $v$ to the vertices of $S$ according to their \emph{types}.
Very roughly, every shortest path from $v$ to a vertex of $S$ can be technically massaged so that it first travels for a significant distance along one shortest path $P_1$ from the family $\Pp$ witnessing geodesic-coverability, then jumps through a short ``connector'' to another shortest path $P_2\in \Pp$ and travels for a significant distance along it, and so on; the sequence $P_1, P_2,\ldots$ is the type of $Q$.
There are only $k^{\Oh{k}}$ relevant types to consider, and it turns out that paths of the same type starting in the same vertex lead to vertices that are at distance at most $k^{\Oh{k}}\cdot \rho$ from each other. This allows us to cover $S$ with $k^{\Oh{k}}$ balls of radius $k^{\Oh{k}}$, and an additional argument is needed to cover each of those balls with $k^{\Oh{k}}$ balls of radius $2\rho$. Once the coverage property of the BFS layers is established, we naturally construct the partition $C_1, C_2, \ldots, C_\ell$ so that each $C_i$ consists of $2\rho$ consecutive layers. The quasi-isometry statement is obtained by combining all these observations with a generic construction. We note that the proof naturally gives a linear-time algorithm to construct a partition $C_1, C_2, \ldots, C_\ell$ with the asserted properties.

Compared to \cref{thm:cover}, \cref{thm:main} gives a $k^{\Oh{k}}$ bound instead of $2^{\Oh{k}}$. In \cite{dumas2024geodesics}, Dumas et al.~first gave a simpler argument giving a bound of $k^{\Oh{k}}$ on the number of relevant types and then proceeded with a more complicated argument reducing the bound to $2^{\Oh{k}}$.
In our proof, we use the simpler argument because we lose an additional $k^{\Oh{k}}$ factor later in the proof as well when proving that paths of the same type lead to vertices not far from each other.
Compared to \cref{thm:cover}, this part of the proof gets far more complicated in the coarse setting.


The partition $C_1,C_2,\ldots,C_\ell$ as described in \cref{thm:main} shall be called a \emph{distance-$2\rho$ path-partition-decomposition} of $G$. We see it as the natural coarse counterpart of the linear variant of \emph{tree-partition-decompositions}, underlying the graph parameter \emph{tree-partition width}; see an overview of this parameter e.g.~in the work of Wood~\cite{Wood09}. More generally, a \emph{distance-$\rho$ tree-partition-decomposition} of a graph $G$ would be a tree $T$ together with a partition of the vertex set of $G$ into parts $\Set{C_x \colon x\in \V{T})}$ so that whenever $\dist{}{u,v}\leq \rho$, $u$ and $v$ must belong to the same part $C_x$ or to two parts $C_x, C_y$ so that $x,y$ are adjacent in $T$. We observe that such a decomposition can be used to efficiently solve some computational problems of metric character in $G$, by means of dynamic programming. We showcase this on the examples of the computation of $\distIS{2\rho}{G}$ --- the maximum number of disjoint balls of radius~$\rho$ that can be packed in $G$ --- and $\distDS{\rho}{G}$ --- the minimum number of balls of radius~$\rho$ needed to cover all the vertices of~$G$.

\begin{restatable}{theorem}{thmDistIS}
    \label{thm:distIS_xp}
    There is an algorithm that, given an $n$-vertex graph $G$ and a \mbox{distance-$\rho$} tree-partition-decomposition of $G$ of width at most $k$ and degree at most~$\Delta$, computes $\distIS{2\rho}{G}$ in time~$n^{\bigO{\Delta k}}$.
\end{restatable}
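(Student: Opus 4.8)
The plan is a bottom-up dynamic programming over the decomposition tree $T$, whose parts we denote $\Set{C_x \colon x \in \V{T}}$. Recall first that $\distIS{2\rho}{G}$ is the maximum size of a set $S \subseteq \V{G}$ with all pairwise distances greater than $2\rho$: two balls of radius $\rho$ are vertex-disjoint precisely when their centres are at distance more than $2\rho$, so such sets are exactly the centre sets of maximum packings of radius-$\rho$ balls. Call a set \emph{feasible} if its pairwise distances all exceed $2\rho$, and call two vertices \emph{in conflict} if their distance is at most $2\rho$. Two facts drive the algorithm. \textbf{(F1)} By the definition of width, every part $C_x$ is covered by at most $k$ balls of radius $\rho$; any two vertices in a common such ball are in conflict, so $|S \cap C_x| \le k$ for every feasible $S$. \textbf{(F2)} If $u$ and $v$ are in conflict, then picking a vertex $w$ on a shortest $u$--$v$ path with $\dist{}{u,w}\le\rho$ and $\dist{}{w,v}\le\rho$ (which exists since $\dist{}{u,v}\le 2\rho$), the defining property of a distance-$\rho$ tree-partition-decomposition puts $u$ in $w$'s part or a neighbouring one, and likewise $v$; hence the parts of $u$ and $v$ lie at distance at most $2$ in $T$. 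So conflicts only ever relate vertices whose parts are within tree-distance $2$.

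Root $T$ at an arbitrary node $r$. For a node $x$ with children $x_1,\dots,x_m$ ($m \le \Delta$), let $T_x$ be the subtree rooted at $x$, let $V_x = \bigcup_{y\in\V{T_x}} C_y$, and let $\partial_x = C_x \cup C_{x_1}\cup\dots\cup C_{x_m}$. By (F2), a vertex of $V_x$ in conflict with a vertex outside $V_x$ must lie in a part within tree-distance $2$ of a part outside $T_x$, and the only parts of $T_x$ with that property are $C_x$ and the $C_{x_i}$ (a grandchild's part is already at tree-distance at least $3$ from everything outside $T_x$ --- which is why we must, but need only, remember the children's parts). We therefore let the DP state at $x$ be a \emph{trace} $R \subseteq \partial_x$, and define $\mathrm{opt}_x[R]$ to be the maximum of $|S\cap V_x|$ over feasible sets $S$ with $S\cap\partial_x = R$ (and $-\infty$ if no such $S$ exists, e.g.\ if $R$ is itself infeasible). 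By (F1), only traces with $|R\cap C_x|\le k$ and $|R\cap C_{x_i}|\le k$ for all $i$ are relevant, and there are at most $n^{\bigO{\Delta k}}$ of these. For a leaf, $\partial_x = V_x = C_x$ and $\mathrm{opt}_x[R]=|R|$ for feasible $R$. For an internal node, writing $A := R\cap C_x$ and $B_i := R\cap C_{x_i}$, every feasible $S$ with $S\cap\partial_x=R$ restricts in each $T_{x_i}$ to a feasible set whose trace $R_i=S\cap\partial_{x_i}$ satisfies $R_i\cap C_{x_i}=B_i$, and conversely such restrictions reassemble with $A$ as long as each $A\cup R_i$ is feasible; a short case analysis using (F2) --- every conflict crossing the edge $x x_i$ has an endpoint in $\partial_{x_i}$, every conflict between two distinct subtrees $T_{x_i},T_{x_j}$ lies in $C_{x_i}\cup C_{x_j}$, and conflicts inside $C_x$ or inside a single $T_{x_i}$ are already witnessed by feasibility of $R$ and by $\mathrm{opt}_{x_i}$ --- shows that these local checks certify global feasibility of the assembled set, with no double counting since $C_x,V_{x_1},\dots,V_{x_m}$ partition $V_x$. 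Because conflicts between different children involve only the already-fixed sets $B_i$, the choices for the children decouple, giving
\[
  \mathrm{opt}_x[R] \;=\; |A| \;+\; \sum_{i=1}^{m}\; \max\Set{\mathrm{opt}_{x_i}[R_i]\colon R_i \text{ a trace of } x_i,\; R_i\cap C_{x_i}=B_i,\; A\cup R_i \text{ feasible}},
\]
with an empty maximum read as $-\infty$ (and $\mathrm{opt}_x[R]=-\infty$ if $R$ is infeasible). The answer is $\distIS{2\rho}{G}=\max_R \mathrm{opt}_r[R]$, and one reconstructs an optimal packing by tracing back the choices.

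For the running time, first precompute the conflict relation --- all pairs at distance at most $2\rho$ --- by a BFS truncated at depth $2\rho$ from every vertex, in polynomial time, which is dominated by the rest. At each node there are $n^{\bigO{\Delta k}}$ traces, and evaluating the recursion for a single trace scans, for each of the at most $\Delta$ children, that child's $n^{\bigO{\Delta k}}$ traces and performs $\bigO{(\Delta k)^2}$ conflict look-ups per trace; the decoupling over children is essential here, as a joint enumeration over all children would cost $n^{\bigO{\Delta^2 k}}$. Hence each node costs $n^{\bigO{\Delta k}}$, and --- assuming, as we may, that $|\V{T}|\le n$, since empty parts can be deleted or suppressed without raising the maximum degree --- the whole computation runs in $n^{\bigO{\Delta k}}$ time. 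I expect the only delicate point to be the choice of state: it must include the children's parts $C_{x_i}$ (so that an ancestor can later detect conflicts reaching into $T_x$) but must exclude grandchildren's parts (which (F2) renders irrelevant), for including them would inflate both the state and the exponent; and the transition must use (F2) to decouple over the children. Beyond that, correctness is a routine induction on $T$.
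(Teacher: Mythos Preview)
Your proposal is correct and follows essentially the same bottom-up dynamic programming over $T$ as the paper, with the same $n^{\bigO{\Delta k}}$ running time. The only difference is cosmetic: the paper's state at a node $x$ is the set of independent-set vertices within distance $\rho$ of $C_x$ (bounded in size by $(\Delta+1)k$ via \cref{lem:is-ds-comparison}), whereas your state is the intersection of the independent set with $C_x$ and the children's parts; both parametrisations rely on the same tree-distance-$2$ locality of conflicts that you isolate as~(F2), and both yield a recursion that decouples over the children.
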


\vspace{-0.4cm}

\begin{restatable}{theorem}{thmDistDS}
    \label{thm:distDS_xp}
    There is an algorithm that, given an $n$-vertex graph $G$ and a \mbox{distance-$\rho$} tree-partition-decomposition of $G$ of width at most $k$ and degree at most~$\Delta$, computes $\distDS{\rho}{G}$ in time~$n^{\bigO{\Delta^2 k}}$.
\end{restatable}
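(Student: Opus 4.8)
Recall that $\distDS{\rho}{G}$ equals the minimum size of a set $D\subseteq V(G)$ that $\rho$-dominates $G$, i.e.\ such that every vertex of $G$ lies within distance $\rho$ of some vertex of $D$; the plan is to compute this quantity by dynamic programming over the given decomposition, with tree $T$ and parts $(C_x)_{x\in V(T)}$. As preprocessing I would run a BFS from each vertex to tabulate all pairwise distances, in time $\Oh{n\cdot|E(G)|}$, so that $\dist{}{u,v}$ is thereafter available in constant time. The one structural fact that localises the computation is the defining property of the decomposition: if $\dist{}{u,v}\le\rho$ with $u\in C_x$ and $v\in C_y$, then $x=y$ or $xy\in E(T)$. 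Fix an arbitrary root of $T$; for a node $x$ write $p$ for its parent and $V_x:=\bigcup_{y\in V(T_x)}C_y$ for the union of the parts in the subtree rooted at $x$. The locality property then yields: a vertex of $C_x$ can be dominated only by vertices of $C_x$, of $C_p$, or of the part of some child of $x$ (at most $\Delta+1$ parts in all); the vertices of $V_x\setminus C_x$ can be dominated only from within $V_x$; and if a vertex of $C_p$ is within distance $\rho$ of some vertex of $V_x$, that vertex of $V_x$ must lie in $C_x$.

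This invites the following dynamic programming. For a node $x$ with parent $p$ and subsets $\alpha\subseteq C_x$, $\beta\subseteq C_p$ --- to be thought of as $D\cap C_x$ and $D\cap C_p$ --- let $g[x](\alpha,\beta)$ be the minimum of $|D\cap V_x|$ over all sets $D$ with $D\cap C_x=\alpha$ such that every vertex of $V_x$ lies within distance $\rho$ of $(D\cap V_x)\cup\beta$; by the locality property the ``outside help'' $\beta$ can contribute only to dominating $C_x$. If $x$ has children $c_1,\dots,c_t$, with $t\le\Delta$, then
\[
  g[x](\alpha,\beta)=|\alpha|+\min_{\gamma_1,\dots,\gamma_t}\;\sum_{i=1}^{t}g[c_i](\gamma_i,\alpha),
\]
where the $\gamma_i$ range over candidate values of $D\cap C_{c_i}$ and the minimum is taken over the tuples for which every vertex of $C_x$ lies within distance $\rho$ of $\alpha\cup\beta\cup\gamma_1\cup\dots\cup\gamma_t$ --- these being exactly its candidate dominators. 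Correctness is routine: the sets $\alpha,D\cap V_{c_1},\dots,D\cap V_{c_t}$ are pairwise disjoint, and inductively $g[c_i](\gamma_i,\alpha)$ already certifies that every vertex of $V_{c_i}$ --- including those of $C_{c_i}$, for which the allowed outside help is precisely $\alpha=D\cap C_{\mathrm{parent}(c_i)}$ --- is dominated. Finally $\distDS{\rho}{G}=\min_{\alpha}g[r](\alpha,\emptyset)$ for the root $r$, which has no parent part.

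The obstacle to making this efficient is that a part need not be small: the decomposition guarantees only that $C_x$ is covered by $k$ balls of radius $\rho$, so $|C_x|$ may be of order $n$, and we cannot enumerate all subsets $\alpha\subseteq C_x$. The heart of the argument is therefore a structural lemma: some optimal $\rho$-dominating set $D$ places at most $\Oh{\Delta k}$ vertices in every part. The plan for proving it is an exchange argument. Take an inclusion-minimal optimal $D$; every $d\in D\cap C_x$ owns a private vertex, which by the locality property lies in one of the at most $\Delta+1$ parts equal or adjacent to $x$, each covered by $k$ balls of radius $\rho$; once a single covering ball absorbs the private vertices of too many dominators from $C_x$, one argues that some of those dominators can be deleted or relocated without destroying domination anywhere, bringing $|D\cap C_x|$ down to $\Oh{\Delta k}$. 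Making this relocation sound --- in the presence of the ball covering, the up to $\Delta$ neighbouring parts, and the ambient distances --- is the principal technical difficulty. Granting the lemma, in the recurrence above each of $\alpha,\beta,\gamma_1,\dots,\gamma_t$ ranges over the $n^{\Oh{\Delta k}}$ subsets of size $\Oh{\Delta k}$ of the relevant part. As $x$ has at most $\Delta$ children, enumerating a tuple $(\gamma_1,\dots,\gamma_t)$ costs $n^{\Oh{\Delta^2 k}}$ time, and for each tuple the covering condition and the $t$ table look-ups are checked in $\mathrm{poly}(n)$ time using the precomputed distances. Summing over the $n$ nodes of $T$ gives the claimed running time $n^{\bigO{\Delta^2 k}}$.
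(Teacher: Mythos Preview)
Your dynamic programming framework is sound, and the correctness of the recurrence for $g[x](\alpha,\beta)$ is easy to verify. The real issue is the structural lemma, which you correctly identify as the crux but whose proof you leave open. The private-vertex argument you sketch does \emph{not} work as stated: knowing that two dominators $d_i,d_j\in D\cap C_x$ have (one of) their private vertices in a common ball $B_\rho(c)$ does not let you delete either of them, because $d_i$ may have \emph{other} private vertices outside $B_\rho(c)$. No pigeonhole on one chosen private vertex per dominator yields a size reduction. Fortunately the lemma is actually easier than you suggest. Any minimum-size $D$ satisfies $|D\cap C_x|\le (\Delta+1)k$: the set of vertices dominated by $D\cap C_x$ lies in $\bigcup_{y:\dist_T(x,y)\le 1}C_y$, which by assumption is $((\Delta+1)k,\rho)$-coverable by some set $F$; then $(D\setminus C_x)\cup F$ is still a dominating set, and minimality of $D$ gives $|D\cap C_x|\le|F|\le(\Delta+1)k$. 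With this in hand your argument goes through and yields the claimed $n^{\Oh{\Delta^2 k}}$ running time.

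It is worth noting that your route differs from the paper's in an instructive way. The paper chooses as its state at node $x$ the set $D[x]\coloneqq\{u\in D:\dist{}{u,C_x}\le\rho\}$, i.e.\ \emph{all} dominators that can touch $C_x$, and proves (by the same style of exchange, but with one more hop in $T$) that $|D[x]|\le(\Delta^2+1)k$. The payoff is that the constraint ``$C_x$ is dominated'' depends on the state alone, so in the recurrence the children can be processed \emph{independently}: for each child $y$ one minimises over compatible $B\in\mathcal{D}_y$ and sums. Your leaner state $(D\cap C_x,\,D\cap C_p)$ enjoys the tighter per-part bound $\Oh{\Delta k}$, but forces the children to be coupled through the covering constraint on $C_x$, so you must enumerate the $t\le\Delta$ tuples $(\gamma_1,\dots,\gamma_t)$ jointly. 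Both designs land at $n^{\Oh{\Delta^2 k}}$, from opposite directions: the paper pays $\Delta^2$ in the state size and gets independence across children; you pay $\Delta$ in the state size and another $\Delta$ in the joint enumeration.
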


By combining \cref{thm:main,thm:distIS_xp,thm:distDS_xp}, we obtain the following algorithmic consequence.

\begin{corollary}
Suppose $G$ is an $n$-vertex $(k,\rho)$-geodesic-coverable graph, for some positive integers~$k,\rho$. Then $\distIS{4\rho}{G}$ and $\distDS{2\rho}{G}$ can be computed in time $n^{k^{\Oh{k}}}$.
\end{corollary}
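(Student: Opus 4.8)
The plan is to derive the corollary by composing the three preceding results, essentially by substituting parameters. First I would apply \cref{thm:main} to the given $(k,\rho)$-geodesic-coverable graph $G$. This yields a partition $C_1, C_2, \ldots, C_\ell$ of $V(G)$ that is precisely a distance-$2\rho$ path-partition-decomposition of $G$: every part $C_i$ is coverable by $k^{\Oh{k}}$ balls of radius $2\rho$, and any two vertices at distance at most $2\rho$ lie in the same part or in two consecutive parts. As remarked right after the statement of \cref{thm:main}, such a partition can moreover be computed in linear time. A path-partition-decomposition is a tree-partition-decomposition whose underlying tree is the path $1-2-\cdots-\ell$, which has maximum degree at most $2$; hence, after this step, $G$ comes equipped with a distance-$2\rho$ tree-partition-decomposition of width $k^{\Oh{k}}$ and degree at most $2$.

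Next I would feed this decomposition into \cref{thm:distIS_xp} and \cref{thm:distDS_xp}, in each case instantiating their distance parameter as $2\rho$, their width parameter as $k^{\Oh{k}}$, and their degree parameter $\Delta$ as $2$. \cref{thm:distIS_xp} then computes $\distIS{4\rho}{G}$ in time $n^{\Oh{2\cdot k^{\Oh{k}}}}$, and \cref{thm:distDS_xp} computes $\distDS{2\rho}{G}$ in time $n^{\Oh{4\cdot k^{\Oh{k}}}}$. In both cases the constant factor coming from $\Delta$ is absorbed into the $k^{\Oh{k}}$ expression, so both running times are $n^{k^{\Oh{k}}}$, exactly as claimed.

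The only point requiring a word of care — and the closest thing to an obstacle here — is whether \cref{thm:distIS_xp} and \cref{thm:distDS_xp} expect the covers of the parts by balls to be provided explicitly as part of the input, rather than merely guaranteed to exist. If so, then for each part $C_i$ I would produce such a cover by brute force: enumerate all choices of $k^{\Oh{k}}$ centers in $G$ (there are $n^{k^{\Oh{k}}}$ of them) and test each in polynomial time for whether the induced balls of radius $2\rho$ cover $C_i$; this stays comfortably within the target running time. Apart from this bookkeeping, the argument is a direct composition of \cref{thm:main,thm:distIS_xp,thm:distDS_xp}, and I do not anticipate any genuine difficulty.
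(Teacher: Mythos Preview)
Your proposal is correct and matches the paper's approach exactly: the paper simply states that the corollary follows by combining \cref{thm:main,thm:distIS_xp,thm:distDS_xp}, and your parameter substitution (distance $2\rho$, width $k^{\Oh{k}}$, degree $\Delta\leq 2$ from the path shape) is the intended one. The extra care you take about explicitly computing the ball covers is not needed for the algorithms as written, but it is harmless and stays within the time bound.
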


In the language of \textsl{parameterized complexity}, this places the computation of $\distIS{4\rho}{G}$ and $\distDS{2\rho}{G}$ in the complexity class $\mathsf{XP}$ (\emph{slicewise polynomial}) when parametrised by the smallest $k$ such that the graph is $(k,\rho)$-geodesic-coverable.


\section{Preliminaries}
\label{sec:preliminaries}

\paragraph{Paths and walks.}
A \emph{walk} $P$ of length $\ell$ in a graph $G$ is a sequence of vertices $v_0, \dots, v_{\ell}$ such that $v_iv_{i+1} \in \E{G}$ for all $0 \leq i < \ell$; and $P$ is a \emph{path} if the vertices $v_0,\ldots,v_\ell$ are additionally pairwise different.
We sometimes identify $P$ with the subgraph of $G$ containing these edges and vertices.
We identify $v_0$ as the \emph{startvertex}, $\Start{P}$, and $v_{\ell}$ as the \emph{endvertex}, $\End{P}$, in order to distinguish the two ways of traversing a walk: \emph{forward} from $\Start{P}$ to $\End{P}$, or \emph{backward} from $\End{P}$ to $\Start{P}.$
We refer to $P$ as a $v_0$-$v_{\ell}$-walk (or $v_0$-$v_\ell$-path, if it is a path).
If $x,y \in \V{P}$, then we write $xPy$ for the subwalk of $P$ starting in $x$ and ending in $y$.
We write $xP$ instead of $xP\End{P}$ and $Px$ instead of $\Start{P}Px$.
For two walks $P$ and $Q$ with $\End{P}=\Start{Q}$, we write $P \cdot Q$ for their concatenation.
We also combine these two notations to write $PxQ$ instead of $Px \cdot xQ$.

\paragraph{Metric aspects.}
The \emph{distance} between two vertices $u, v \in \V{G}$, denoted $\dist{G}{x,y}$, is the length of the shortest $u$-$v$-path in $G$.
We drop the index in case the graph is evident from the context.
We extend the notation to sets $X,Y$ of vertices in the expected way: $\dist{G}{u,X}\coloneqq \min_{v\in X} \dist{G}{u,v}$ and $\dist{G}{Y, X}\coloneqq \min_{u\in Y} \dist{G}{u, X}$.
If $P$ and $Q$ are paths, we simply write $\dist{}{u,P}$ and $\dist{}{Q,P}$ instead of $\dist{}{u,\V{P}}$ and $\dist{}{\V{Q},\V{P}}$.
A \emph{geodesic} in $G$ is a path that is a shortest path between its endpoints.
Note that every subpath of a geodesic is a geodesic as well.
A path/walk $P$ is a \emph{$\gamma$-almost shortest path/walk} if $\Abs{\Abs{P} - \dist{}{\Start{P},\End{P}}} \leq \gamma$.
Note that every subpath/subwalk of a $\gamma$-almost shortest path/walk is again a $\gamma$-almost shortest path/walk.

For a vertex $u$ and a non-negative integer $\rho$, the radius-$\rho$ ball around $v$ is
\begin{equation*}
    \ball{\rho}{v} \coloneqq \Set{u\in \V{G} \mid \dist{}{u,v} \leq \rho}.
\end{equation*}

A collection of paths $\mathcal{P}$ is a \emph{distance-$\rho$ path-cover} if every vertex of $G$ lies at distance at most $\rho$ of some vertex of a path in $\mathcal{P}.$
We say that $G$ is \emph{$(k,\rho)$-geodesic coverable} if there is a distance-$\rho$ path-cover $\mathcal{P}$ of size $k$ and all paths in $\mathcal{P}$ are geodesics. We then also say that $\mathcal{P}$ is a \emph{distance-$\rho$ geodesic-cover}, or a \emph{$(k,\rho)$-geodesic-cover} of $G$.

A \emph{distance-$\rho$ independent set} in $G$ is a set $I \subseteq \V{G}$ such that $\dist{}{u,v} > \rho$ for all distinct $u,v \in I$.
The maximum size of a distance-$\rho$ independent subset of a set $A\subseteq V(G)$ is denoted by $\distIS{\rho}{A,G}$, and the distance-$\rho$ independence number of $G$ is defined as $\distIS{\rho}{G}\coloneqq \distIS{\rho}{\V{G},G}$. 

A \emph{distance-$\rho$ dominating set} of a set $A\subseteq \V{G}$ in a graph $G$ is a set $D \subseteq \V{G}$ such that for every $u \in A$ there exists $d \in D$ satisfying $\dist{}{u,d} \leq \rho$; we also say that $D$ {\em{distance-$\rho$ dominates}} $A$.
The minimum size of a distance-$\rho$ dominating set of $A$ is denoted as $\distDS{\rho}{A,G}$. We say that $A$ is \emph{$\Brace{k,\rho}$-coverable} if $\distDS{\rho}{A,G}\leq k$.
The \emph{distance-$\rho$ domination number} of $G$ is defined as $\distDS{\rho}{G}\coloneqq \distDS{\rho}{\V{G},G}$. 

We note the following simple relation between the quantities defined above.

\begin{restatable}{lemma}{ISDS}
    \label{lem:is-ds-comparison}
    For any graph $G$, set $A\subseteq \V{G}$, and positive integer~$\rho$, we have    
        $\distIS{2\rho}{A,G}\leq \distDS{\rho}{A,G}$.
\end{restatable}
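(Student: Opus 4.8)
The plan is to argue by an injection from a maximum distance-$2\rho$ independent set into an arbitrary distance-$\rho$ dominating set. Fix a distance-$2\rho$ independent set $I\subseteq A$ with $\Abs{I}=\distIS{2\rho}{A,G}$, and fix any set $D\subseteq \V{G}$ that distance-$\rho$ dominates $A$. For each $u\in I$ we have $u\in A$, so by definition of distance-$\rho$ domination there is some $d_u\in D$ with $\dist{}{u,d_u}\le \rho$; fix one such $d_u$ for every $u\in I$.

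The key step is to check that $u\mapsto d_u$ is injective. Suppose towards a contradiction that $d_u=d_v$ for two distinct $u,v\in I$. Then by the triangle inequality
\[
\dist{}{u,v}\;\le\;\dist{}{u,d_u}+\dist{}{d_u,v}\;=\;\dist{}{u,d_u}+\dist{}{d_v,v}\;\le\;\rho+\rho\;=\;2\rho,
\]
which contradicts the assumption that $I$ is a distance-$2\rho$ independent set, as that requires $\dist{}{u,v}>2\rho$ for distinct $u,v\in I$.

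It follows that $\distIS{2\rho}{A,G}=\Abs{I}\le \Abs{D}$. Since $D$ was an arbitrary distance-$\rho$ dominating set of $A$, taking $D$ of minimum size yields $\distIS{2\rho}{A,G}\le \distDS{\rho}{A,G}$, as claimed. There is no real obstacle here; the only thing to be mildly careful about is invoking $u\in A$ (rather than merely $u\in\V{G}$) to ensure that $u$ is indeed distance-$\rho$ dominated by $D$, and noting that the two distances $\dist{}{u,d_u}$ and $\dist{}{d_u,v}$ can be bounded simultaneously precisely because $d_u=d_v$.
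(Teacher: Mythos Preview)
Your proof is correct and follows essentially the same argument as the paper: both use the triangle inequality to show that no element of a distance-$\rho$ dominating set can be within distance $\rho$ of two distinct members of a distance-$2\rho$ independent set, yielding $|I|\le |D|$. The only cosmetic difference is that you phrase this as injectivity of the map $u\mapsto d_u$, while the paper phrases it from the side of $D$ (each $v\in D$ is close to at most one $u\in I$).
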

\begin{proof}
	Let $I\subseteq A$ be a maximum-size distance-$2\rho$ independent subset of $A$, and $D$ be a minimum-size distance-$\rho$ dominating set of $A$. Observe that for every $v\in D$, there is at most one vertex $u\in I$ such that $\dist{}{u,v}\leq \rho$, for otherwise, by triangle inequality, we would have two vertices of $I$ at distance at most $2\rho$ from each other. Since every vertex of $I$ is at distance at most $\rho$ from some vertex of~$D$, this proves that $|I|\leq |D|$.
\end{proof}

\paragraph{Decompositions.} We make use the following coarse counterparts of \emph{path-partition-decompositions} and \emph{tree-partition-decompositions}.
Similarly to other notions in coarse graph theory, these are obtained by requiring the parts of the decomposition to be coverable by a small number of balls.
Fix a positive integer $\rho$.

\begin{definition}
A \emph{distance-$\rho$ path-partition-decomposition} of a graph $G$ is a sequence $C_1,\dots, C_\ell$ of subsets of $\V{G}$ such that
\begin{itemize}
    \item $ \{C_1,C_2, \ldots,C_\ell\}$ is a partition of $\V{G}$, and
    \item for all $u,v \in \V{G}$ with $\dist{}{u,v} \leq \rho$ and $u \in C_i, v \in C_j$, we have $\Abs{i-j} \leq 1$.
\end{itemize}
The \emph{width} of this path-partition-decomposition is defined as
$\max_{1 \leq i \leq n} \Set{\distDS{\rho}{C_i,G}}$.
\end{definition}

\begin{definition}
A \emph{distance-$\rho$ tree-partition-decomposition} of a graph $G$ consists of a tree $T$ and, for every node $x\in \V{T}$, a set of vertices $C_x\subseteq \V{G}$ such that
\begin{itemize}
    \item $\Set{C_x\colon x\in \V{T}}$ is a partition of $\V{G}$, and
    \item for all $u,v \in \V{G}$ with $\dist{}{u,v} \leq \rho$ and $u \in C_x, v \in C_y$, the nodes $x$ and $y$ are either equal or adjacent in $T$.
\end{itemize}
The {\em{width}} of this tree-partition-decomposition is
$\max_{1 \leq i \leq n} \Set{\distDS{\rho}{C_i,G}}$, and its \emph{degree} is the maximum degree of $T$.
\end{definition}

Thus, the width of a path- or tree-partition-decomposition is at most $k$ if and only if every part $C_x$ is $(k,\rho)$-coverable. Let us also recall the classic notions of \emph{path-decompositions} and \emph{pathwidth}.

\begin{definition}
    \label{def:pathwidth}
    A \emph{path-decomposition} of a graph $G$ is a sequence $D_1, \dots, D_{\ell} $ of subsets of $\V{G}$ satisfying the following:
    \begin{itemize}
        \item for every $e \in \E{G}$ there is an $i \in [\ell]$ with $e \subseteq D_i$, and
        \item for all $i \leq j \leq k$ holds $D_i \cap D_k \subseteq D_j.$
    \end{itemize}
    The \emph{width} of such a decomposition is the size of its largest set, and the \emph{pathwidth} of $G$ is the minimal width over all path-decompositions of $G$.
\end{definition}

\paragraph{Quasi-isometries.}
Let $G$ and $H$ be graphs.
A function $\phi~\colon \V{G} \to \V{H}$ is an \emph{$(m,a)$-quasi-isometry from $G$ to $H$} if 
for all $u,v \in \V{G}$ we have
\begin{equation*}
	m^{-1} \cdot \dist{G}{u,v} - a \leq \dist{H}{\Fkt{\phi}{u},\Fkt{\phi}{v}} \leq m \cdot \dist{G}{u,v} + a,
\end{equation*}
and for every $w\in \V{H}$ there exists $u\in \V{G}$ with
    $\dist{H}{w,\Fkt{\phi}{u}} \leq a$.
We say $G$ is \emph{$(m,a)$-quasi-isometric} to $H$ if there is an $(m,a)$-quasi-isometry from $G$ to~$H$.

We use the following generic construction of a quasi-isometry between a graph, and its \say{coarsening} induced by a maximal distance independent set. We use the terminology from~\cite{abrishami2025coarsetreedecompositionscoarse}, but the method is folklore; see e.g.~\cite[Observation~2.1]{coarse2023} or~\cite[Theorem~4]{AlbrechtsenHJKW24}.

Let $G$ be a graph and $I$ be a maximal distance-$\rho$ independent set in $G$.
The \emph{$(I,\rho)$-distance graph} of~$G$, denoted $\DistGraph{\rho}{I}{G}$, is defined as follows
\begin{itemize}
	\item $\V{\DistGraph{\rho}{I}{G}} \coloneqq I$, and
	\item for all $x,y \in I$ we have $xy \in \E{\DistGraph{\rho}{I}{G}}$ if and only if $\dist{G}{x,y} \leq 3\rho$.
\end{itemize}
Note that $\DistGraph{\rho}{I}{G}$ is unweighted, so single edges in this graph roughly correspond to distances between $\rho$ and $3\rho$ in $G$. To get a closer connection between the shortest path metrics, we also define $\whDistGraph{\rho}{I}{G}$ as the graph obtained from $\DistGraph{\rho}{I}{G}$ by replacing every edge with a path of length $3\rho$. With this definition, we have that $G$ and $\whDistGraph{\rho}{I}{G}$ are quasi-isometric in the sense below.

\begin{restatable}[{\hspace{-0.3pt}\cite{abrishami2025coarsetreedecompositionscoarse}}]{lemma}{distGraph}\label{thm:distance-graph}
	Let $G$ be a graph and $I$ be an inclusionwise maximal distance-$\rho$ independent set in $G$. For every $u\in \V{G}$, let 
    $\Fkt{\varphi}{u}$ be a vertex of $I$ closest to $u$ in $G$. (If there are several such vertices, choose an arbitrary one.) Then $\phi$ is a $\Brace{3,3\rho}$-quasi-isometry from $G$ to $\whDistGraph{\rho}{I}{G}$.
\end{restatable}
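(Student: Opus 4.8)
The plan is to verify directly the two defining properties of a $(3,3\rho)$-quasi-isometry for the projection $\varphi$, with target $H \coloneqq \whDistGraph{\rho}{I}{G}$. The only feature of $I$ I would use is maximality: it forces every vertex of $G$ to lie within distance $\rho$ of $I$, so $\dist{G}{u,\varphi(u)} = \dist{G}{u,I}\le \rho$ for all $u$, and $\varphi(x)=x$ whenever $x\in I$. I would also record one structural fact about $H$: the subdivision vertices have degree $2$ and each subdivided edge is a path of length exactly $3\rho$ between two vertices of $I$, so any shortest path of $H$ between two vertices of $I$ is a concatenation of entire subdivided edges; in particular, if $x,y\in I$ are adjacent in $\DistGraph{\rho}{I}{G}$ then $\dist{H}{x,y}\le 3\rho$, and in general such a shortest path has length $3\rho$ times the number of $I$-vertices it meets, minus one. (If $G$ is disconnected one argues componentwise.)

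The coarse-surjectivity clause is immediate: $\varphi$ maps onto $I$, every $w\in I$ is its own image, and every internal vertex of a subdivided edge between $a,b\in I$ is at $H$-distance at most $\lfloor 3\rho/2\rfloor\le 3\rho$ from $\{a,b\}\subseteq I$.

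For the upper bound $\dist{H}{\varphi(u),\varphi(v)}\le 3\dist{G}{u,v}+3\rho$ I would build a short chain of $I$-vertices. Fix a shortest $u$-$v$-path $P$ in $G$, write $d=\dist{G}{u,v}$ and $t=\lceil d/\rho\rceil$, and for $0\le j\le t$ let $q_j$ be the vertex of $P$ at distance $\min(j\rho,d)$ from $u$ along $P$, so that $q_0=u$, $q_t=v$, and $\dist{G}{q_j,q_{j+1}}\le\rho$. Put $z_j=\varphi(q_j)$, so $z_0=\varphi(u)$ and $z_t=\varphi(v)$. Maximality and the triangle inequality give $\dist{G}{z_j,z_{j+1}}\le\rho+\rho+\rho=3\rho$, hence consecutive $z_j$ are equal or adjacent in $\DistGraph{\rho}{I}{G}$, so $\dist{H}{z_j,z_{j+1}}\le 3\rho$; summing the $t$ steps yields $\dist{H}{\varphi(u),\varphi(v)}\le 3\rho t\le 3d+3\rho$. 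For the lower bound it suffices to prove the stronger estimate $\dist{H}{\varphi(u),\varphi(v)}\ge\dist{G}{u,v}-2\rho$: taking a shortest $\varphi(u)$-$\varphi(v)$-path in $H$ and reading off the $I$-vertices $\varphi(u)=a_0,\dots,a_s=\varphi(v)$ along it, the structural fact gives that this path has length $3\rho s$ with each $a_ia_{i+1}$ an edge of $\DistGraph{\rho}{I}{G}$, so $\dist{G}{\varphi(u),\varphi(v)}\le 3\rho s=\dist{H}{\varphi(u),\varphi(v)}$; combining this with $\dist{G}{u,v}\le\dist{G}{u,\varphi(u)}+\dist{G}{\varphi(u),\varphi(v)}+\dist{G}{\varphi(v),v}\le \dist{H}{\varphi(u),\varphi(v)}+2\rho$ finishes this direction.

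There is no serious obstacle here, but the point that needs care is the choice of spacing $\rho$ between the sampled vertices $q_j$ in the upper bound. A larger spacing would shorten the chain but would no longer guarantee $\dist{G}{z_j,z_{j+1}}\le 3\rho$, i.e.\ that consecutive projections form an edge of $\DistGraph{\rho}{I}{G}$; conversely, this spacing is precisely the reason the multiplicative constant in the quasi-isometry is $3$ and not $1$.
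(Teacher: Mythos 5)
Your argument is correct, but there is nothing in the paper to compare it to line by line: the paper does not prove this lemma at all — it imports it from Abrishami et al.\ (and labels the method folklore, pointing also to Georgakopoulos--Papasoglu and Albrechtsen et al.), its only remark being that replacing edges of $\DistGraph{\rho}{I}{G}$ by paths of length $3\rho$ rather than using edge weights $3\rho$ is immaterial. Your proof is the standard one and is complete: maximality of $I$ gives $\dist{G}{u,\varphi(u)}\leq\rho$; sampling a shortest $u$-$v$-path at spacing $\rho$ and projecting gives consecutive $I$-vertices at $G$-distance at most $3\rho$, hence a walk in $\whDistGraph{\rho}{I}{G}$ of length at most $3\rho\lceil \dist{G}{u,v}/\rho\rceil\leq 3\dist{G}{u,v}+3\rho$; for the other direction, the degree-$2$ subdivision vertices force any $H$-shortest path between $I$-vertices to decompose into whole subdivided edges, so $\dist{G}{\varphi(u),\varphi(v)}\leq\dist{H}{\varphi(u),\varphi(v)}$, and the triangle inequality yields the (stronger than needed) bound $\dist{H}{\varphi(u),\varphi(v)}\geq\dist{G}{u,v}-2\rho$; coarse surjectivity is immediate since $\varphi$ fixes $I$ and every subdivision vertex is within $\lfloor 3\rho/2\rfloor$ of $I$. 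Your closing remark about the spacing $\rho$ being the source of the multiplicative constant $3$ is exactly the right observation; the only cosmetic additions I would make are to note explicitly that the case $u=v$ and the disconnected case are harmless (you already flag the latter), so as a self-contained substitute for the citation your write-up would stand.
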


We note that in~\cite{abrishami2025coarsetreedecompositionscoarse}, Abrishami et al. derive $\whDistGraph{\rho}{I}{G}$ from $\DistGraph{\rho}{I}{G}$ slightly differently: instead of replacing edges by paths of length $3\rho$, they just consider $\whDistGraph{\rho}{I}{G}$ to be an edge-weighted graph obtained from $\DistGraph{\rho}{I}{G}$ by assigning weight $3\rho$ to every edge. This minor difference is immaterial for the proof of \cref{thm:distance-graph}.


\section{\Snappaths}

\newcommand{\concat}[1]{\Fkt{\variablestyle{concat}}{#1}}

In this section we discuss the concept of \emph{\snappaths}. Intuitively, a \snappath in a graph $G$ is a sequence consisting alternately of subpaths of geodesics from a fixed geodesic-cover $\Pp$ of $G$, and short ``connectors'' joining the endpoints of those subpaths in order. The idea is that every shortest path $Q$ in $G$ can be ``snapped'' to a \snappath by (roughly) snapping every vertex on $Q$ to the closest geodesic from~$\Pp$. However, we want to avoid reusing the same geodesic of $\Pp$ several times in a \snappath, so every segment between two consecutive ``snappings'' to some $P\in \Pp$ is replaced by simply sliding along~$P$. 

We proceed to a formal description.
Let $G$ be a $(k,\rho)$-geodesic-coverable graph and $\mathcal{P}$ a $(k,\rho)$-geodesic-cover of $G$.
A \emph{$\rho$-\snappath} $Q$ is a sequence $(R_0 , Q_1 , R_1 , Q_2,$ $R_2 , \dots , Q_{k'} , R_{k'})$ with $k' \leq k$ such~that
\begin{itemize}[nosep]
    \item every $Q_i$ is a subpath of a geodesic in $\mathcal{P}$;
    \item for every $P \in \Pp$ there is at most one $i$ such that $Q_i$ is a subpath of $P$;
    \item every $R_i$ is walk in $G$ of length at most $2\rho+1$, and moreover, $R_0$ and $R_{k'}$ are of length at most~$\rho$; and
    \item $\End{R_{i-1}}=\Start{Q_i}$ and $\End{Q_i}=\Start{R_i}$, for all $i\in \Set{1,\ldots,k'}$.
\end{itemize}
We define $\concat{Q} \coloneqq R_0 \cdot Q_1 \cdot R_1 \cdot Q_2 \cdot R_2 \cdot \dots \cdot Q_{k'} \cdot R_{k'}$.
We generalise notation for paths and walks to \snappaths by applying it to the concatenation instead wherever sensible, for example, $\Abs{Q} \coloneqq \Abs{\concat{Q}}$.
The geodesics of $\Pp$ that the paths $Q_i$ are subpaths of are called {\em{visited}} by $Q$.

The following lemma establishes that every path in $G$ lies \say{near to} a \snappath.

\begin{lemma}
	\label{lem:every_path_can_be_snapped}
    Let $G$ be a $(k,\rho)$-geodesic-coverable graph and $\mathcal{P}$ a $(k,\rho)$-geodesic-cover of $G$.
	For every shortest path $P$ in $G$ there exists a $\rho$-\snappath $Q$ with $\Start{Q} = \Start{P}$, $\End{Q} = \End{P},$ and $\Abs{\Abs{P} - \Abs{Q}} \leq 4\rho k$.
\end{lemma}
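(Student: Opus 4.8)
The plan is to build $Q$ by a single greedy left-to-right sweep along $P = p_0 p_1 \cdots p_m$, committing to one geodesic of $\mathcal{P}$ at a time and sliding along it for as long as that geodesic stays within distance $\rho$ of $P$. Concretely: set $i_0 := 0$; having defined $i_{r-1}\le m$, pick (using $(k,\rho)$-geodesic-coverability) a geodesic $A_r\in\mathcal{P}$ with $\dist{}{p_{i_{r-1}},A_r}\le\rho$, let $j_r$ be the \emph{largest} index with $\dist{}{p_{j_r},A_r}\le\rho$ (so $j_r\ge i_{r-1}$), and set $i_r := j_r+1$, stopping the moment $j_r=m$. Write $k'$ for the number of rounds. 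For each round fix a vertex $a_r\in\V{A_r}$ with $\dist{}{p_{i_{r-1}},a_r}\le\rho$ and a vertex $b_r\in\V{A_r}$ with $\dist{}{p_{j_r},b_r}\le\rho$, let $Q_r$ be the subpath of $A_r$ between $a_r$ and $b_r$ (a geodesic, being a subpath of a geodesic), and define the connectors by: $R_0 :=$ a shortest path from $\Start{P}$ to $a_1$; for $1\le r<k'$, $R_r :=$ the walk obtained by concatenating a shortest path from $b_r$ to $p_{j_r}$, the edge $p_{j_r}p_{i_r}$ of $P$ (recall $i_r=j_r+1$), and a shortest path from $p_{i_r}$ to $a_{r+1}$; and $R_{k'} :=$ a shortest path from $b_{k'}$ to $\End{P}$ (here $j_{k'}=m$, so $b_{k'}$ is within $\rho$ of $p_m=\End{P}$). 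Set $Q := (R_0,Q_1,R_1,\dots,Q_{k'},R_{k'})$; then $\Start{Q}=\Start{P}$ and $\End{Q}=\End{P}$ hold by construction, and $\End{R_{r-1}}=\Start{Q_r}=a_r$, $\End{Q_r}=\Start{R_r}=b_r$ follow from how $a_r,b_r$ were chosen.

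It remains to check that $Q$ is a genuine $\rho$-\snappath with $k'\le k$ and that $\Abs{\Abs P-\Abs Q}\le 4\rho k$. The key point, and the one place that needs care, is that the visited geodesics $A_1,\dots,A_{k'}$ are pairwise distinct (the second bullet in the definition of a $\rho$-\snappath); a naive ``snap each $p_i$ to its nearest geodesic'' can oscillate between two geodesics and reuse them, which is exactly what choosing $j_r$ as the \emph{last} covered index prevents. Indeed, for $r<s$ we have $i_{s-1}\ge i_r=j_r+1>j_r$, so by maximality of $j_r$ in its round $\dist{}{p_{i_{s-1}},A_r}>\rho$; but $\dist{}{p_{i_{s-1}},A_s}\le\rho$ by the choice of $A_s$, hence $A_s\ne A_r$. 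Distinctness immediately gives $k'\le|\mathcal{P}|=k$, and since $j_1<j_2<\cdots$ is a strictly increasing sequence of integers bounded by $m$, the process terminates with $j_{k'}=m$. The connector-length conditions, namely $\Abs{R_0},\Abs{R_{k'}}\le\rho$ and $\Abs{R_r}\le 2\rho+1$ for the middle ones, are immediate from the construction, so all four bullets of the definition hold.

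For the length estimate, the triangle inequality together with $A_r$ being a geodesic gives $\Abs{Q_r}=\dist{}{a_r,b_r}\le\dist{}{p_{i_{r-1}},p_{j_r}}+2\rho\le(j_r-i_{r-1})+2\rho$, using that $p_{i_{r-1}}$ and $p_{j_r}$ are joined by a subpath of $P$ of length $j_r-i_{r-1}$. Because $i_r=j_r+1$, $i_0=0$ and $j_{k'}=m$, the sets $\Set{i_{r-1},\dots,j_r}$ partition $\Set{0,\dots,m}$, so $\sum_{r=1}^{k'}(j_r-i_{r-1})=m+1-k'$. Adding up $\Abs{R_0}\le\rho$, the bounds on the $\Abs{Q_r}$, the middle $\Abs{R_r}\le 2\rho+1$, and $\Abs{R_{k'}}\le\rho$, the $k'$-terms telescope and one obtains $\Abs Q\le m+4\rho k'\le\Abs P+4\rho k$. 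For the reverse inequality there is nothing to do: $\concat{Q}$ is a walk from $\Start{P}$ to $\End{P}$ and $P$ is a shortest path, so $\Abs Q=\Abs{\concat Q}\ge\dist{}{\Start P,\End P}=\Abs P$. Hence $\Abs{\Abs P-\Abs Q}\le 4\rho k$. Degenerate situations such as $P$ being a single vertex, or the sweep finishing in one round, are subsumed by the same formulas.
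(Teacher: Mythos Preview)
Your proof is correct and follows essentially the same greedy left-to-right strategy as the paper's proof: snap the current position to a geodesic of $\mathcal{P}$, slide along it as far as possible, then move on, ensuring each geodesic is used at most once. The only cosmetic differences are that the paper phrases the construction as an induction on the number of geodesics touched and uses a fixed ``nearest geodesic'' assignment (taking the last vertex \emph{assigned} to the current geodesic rather than the last vertex \emph{within $\rho$} of it), whereas you give a direct iterative description; your distinctness argument via the maximality of $j_r$ and your telescoping length computation are clean and perhaps slightly more transparent than the inductive bookkeeping.
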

\begin{proof}
    Let $\nearestGeodesicVtx{}\colon \V{G} \to \bigcup_{P_i \in \mathcal{P}} \V{P_i}$ map every vertex of $G$ to a vertex on a path in $\mathcal{P}$ minimising the distance to it. (If there are multiple vertices minimising the distance, select an arbitrary one.) Note that $\dist{}{x,\nearestGeodesicVtx{x}}\leq \rho$, for $\Pp$ is a distance-$\rho$ geodesic-cover.
    Also, let $\nearestGeodesic{}\colon \V{G} \to \mathcal{P}$ map every vertex $x$ to a path of $\mathcal{P}$ containing $\nearestGeodesicVtx{x}$, \ie~$\nearestGeodesicVtx{x} \in \nearestGeodesic{x}$. For a walk $S$, we write $\nearestGeodesic{S}\coloneqq \nearestGeodesic{\V{S}}$.

    We now prove the following statement, which implies the desired one, by induction on $k'\coloneqq \Abs{\nearestGeodesic{P}}$:
    There is a $\rho$-\snappath $Q$ with $\Start{Q} = \Start{P}$, $\End{Q} = \End{P}$, and $\Abs{\Abs{P} - \Abs{Q}} \leq 4\rho k'$ which only visits paths in $\nearestGeodesic{P}$.

    If $\nearestGeodesic{x} = \nearestGeodesic{y} = P_1 \in \mathcal{P}$ for all $x,y\in\V{P}$, then there is a path $R_0$ of length at most~$\rho$ from $\Start{P}$ to $\nearestGeodesicVtx{\Start{P}}$ and a path $R_1$ from $\End{P}$ to $\nearestGeodesicVtx{\End{P}}$.
    Then we observe that the walk $R_0 \cdot \nearestGeodesicVtx{\Start{P}}P_1 \nearestGeodesicVtx{\End{P}}\cdot R_1$ yields the desired \snappath.



    So assume $\nearestGeodesic{P}$ contains at least two geodesics.
    Let $u \coloneqq \Start{P}$ and $P_1 \coloneqq \nearestGeodesic{u}$.
    Let $v \in \V{P}$ be the last vertex along $P$ with $\nearestGeodesic{v} = P_1$ and $v'$ be its successor on $P$ (if $v=\End{P}$, we proceed as in the base case).
    We define $R_0$ to be a shortest path from $u$ to $\nearestGeodesicVtx{u}$ and $R^\circ_1$ to be a shortest path from $v$ to~$\nearestGeodesicVtx{v}$; both are of length at most $\rho$.
    Also, let $Q_1 \coloneqq \nearestGeodesicVtx{u} P_1 \nearestGeodesicVtx{v}$.
    Note that, as both $Pv$ and $Q_1$ are shortest paths, we have $\Abs{\Abs{Pv} - \Abs{Q_1}} \leq 2\rho.$
    As $\nearestGeodesic{v'P}$ contains a geodesic less than $\nearestGeodesic{P}$, by induction, there is a $\rho$-\snappath $Q'=(R'_0,Q_1',R_1',\ldots,Q'_{k'-1},R_{k'-1})$ starting in $v'$ and ending in $\End{P}$ with $\Abs{\Abs{v'P} - \Abs{Q'}} \leq 4\rho(k'-1)$ and only using geodesics from $\nearestGeodesic{v'P}$, so in particular not using $P_1$.
    Next, we define \[Q \coloneqq (R_0,\, Q_1,\, R^\circ_1 \cdot vv' \cdot R_0',\, Q'_1,\ldots,\, Q'_{k'-1},\, R_{k'-1}).\]
    Note that $|R_0|\leq \rho$ and $|R_1^\circ\cdot vv'\cdot R_0'|\leq 2\rho+1$. Further, we have\[\Abs{\Abs{P} - \Abs{Q}} \leq \Abs{R_0}+\Abs{\Abs{Pv} - \Abs{Q_1}} + \Abs{R^\circ_1} + \Abs{\Abs{v'P} - \Abs{Q'}} \leq 4\rho + 4\rho(k'-1) = 4\rho k'.\]
    We conclude that $Q$ satisfies all the requirements for the desired $\rho$-\snappath.
\end{proof}




\newcommand{\newstuff}[1]{{\leavevmode\color{blue}[#1]}}
\newcommand{\deltaSW}{\delta_{\variablestyle{SW}}}

Next, we need to massage the \snappaths a bit in order to achieve a certain minimality property. Fix a graph $G$ with a $(k,\rho)$-geodesic-cover $\Pp$. We define a sequence of values $\rho_0,\ldots,\rho_k$ as follows:
\begin{align*}
    \rho_0 \coloneqq 2\rho+1  \qquad\textrm{and}\qquad  \rho_i \coloneqq (2k+5)\cdot \rho_{i-1}\quad\textrm{for }i=1,2,\ldots,k.
\end{align*}
For $\ell\in \{0,1,\ldots,k\}$, an \emph{$\ell$-simplified $\rho$-\snappath} $Q$ is a sequence $(R_0, Q_1,R_1,Q_2, R_2,\dots,\allowbreak Q_{k'},R_{k'})$ such that
\begin{itemize}[nosep]
\item every $Q_i$ is subpath of a geodesic in $\Pp$, with every geodesic in $\Pp$ containing at most one $Q_i$ as a subpath,
\item every $R_i$ is a walk in $G$ of length at most $\rho_{\ell}$, and
\item every $Q_i$ is of length larger than $(2k+3)\rho_{\ell}$.
\end{itemize}
Note that we allow $k'=0$; then $Q$ consists only of the walk $R_0$. Again, the $k'$ paths of $\Pp$ that contain the paths $Q_i$ are subpaths are called {\em{visited}} by $Q$.
Same as for \snappaths, we generalise the notation for paths and walks to simplified \snappaths by applying it to the concatenation wherever sensible.



The intuition behind simplified \snappaths is that we allow the walks $R_i$ to be somewhat longer, but in return we require that all paths $Q_i$ are significantly longer than all the walks $R_i$. In the next lemma we show that every \snappath can be turned into a simplified \snappath by merging some walks in the~sequence.

\begin{lemma}
    \label{lem:every_snal_path_can_be_simplified}
    Let $G$ be a graph with a $(k,\rho)$-geodesic-cover $\Pp$.
    Then for every $\rho$-\snappath $P$, there is some $\ell\in \Set{0,1,\ldots,k}$ and an $\ell$-simplified $\rho$-\snappath $P'$ such that $\concat{P}= \concat{P'}$.
\end{lemma}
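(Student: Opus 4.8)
The plan is to prove the statement by induction on $\ell$, starting from $\ell=0$. Observe that a $\rho$-\snappath is almost a $0$-simplified $\rho$-\snappath: the walks $R_i$ have length at most $2\rho+1=\rho_0$ as required, and every $Q_i$ is a subpath of a distinct geodesic of $\Pp$. The only property that may fail is the third one: some $Q_i$ may be \emph{short}, i.e.\ of length at most $(2k+3)\rho_0$. So the induction hypothesis at level $\ell$ will be: for every $\rho$-\snappath $P$ that satisfies the first two bullets of the definition of an $\ell$-simplified \snappath (i.e.\ distinct geodesics, walks bounded by $\rho_\ell$), there is some $\ell'\in\{\ell,\ell+1,\ldots,k\}$ and an $\ell'$-simplified $\rho$-\snappath $P'$ with $\concat{P}=\concat{P'}$. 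The base case $\ell=k$ is trivial, because once all walks are bounded by $\rho_k$ and there are at most $k$ of the $Q_i$'s, if some $Q_i$ is short we can simply fold it, together with its neighbouring walks $R_{i-1}$ and $R_i$, into a single walk; this only decreases the number of $Q_i$'s, and one checks the new combined walk has length at most $\rho_k+ (2k+3)\rho_k+\rho_k \le (2k+5)\rho_k = \rho_{k+1}$ — but wait, $\rho_{k+1}$ is not defined, so in the base case one must instead argue that after folding all short $Q_i$ at level $k$ we cannot create a walk exceeding $\rho_k$ unless we have folded many of them, which needs the counting below; cleaner is to make the base case $\ell = k$ and simply note at most $k$ short paths exist, fold them all at once, bound the resulting walk lengths using the global bound $\Abs{P}$, and observe no $Q_i$ remains short because folding all of them leaves only long ones.

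For the inductive step, suppose $P$ is a $\rho$-\snappath meeting the first two bullets at level $\ell$. If no $Q_i$ is short, then $P$ is already $\ell$-simplified and we are done. Otherwise, pick a maximal \emph{block} of consecutive indices $i, i+1, \ldots, j$ such that each of $Q_i, \ldots, Q_j$ is short, and merge the entire segment $R_{i-1}\cdot Q_i\cdot R_i\cdots Q_j\cdot R_j$ into a single walk $R'$, reducing the number of $Q$-paths. After doing this for every maximal block of short $Q$-paths simultaneously, we obtain a new sequence $P^{\star}$ with the same concatenation, in which every remaining $Q_i$ is long (length $>(2k+3)\rho_\ell$) and the geodesics visited are still distinct; the new walks have length at most $(b+1)\rho_\ell + b\cdot (2k+3)\rho_\ell$ where $b\le k$ is the number of short $Q$-paths in the block, hence at most $\big((k+1)+k(2k+3)\big)\rho_\ell \le (2k+5)\rho_\ell\cdot\text{(something)}$ — here is the crux. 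The bound $(b+1)+b(2k+3) = b(2k+4)+1$; since $b\le k$ this is at most $k(2k+4)+1 = 2k^2+4k+1 \le (2k+5)\rho_\ell/\rho_\ell$... this does not immediately give $\le (2k+5)$. So in fact a single merge at level $\ell$ does not land inside level $\ell+1$; one must be more careful and only merge, at each step, a single short $Q_i$ together with its two adjacent walks, producing a walk of length at most $\rho_\ell + (2k+3)\rho_\ell + \rho_\ell = (2k+5)\rho_\ell = \rho_{\ell+1}$. This is exactly why $\rho_{\ell+1}$ was defined as $(2k+5)\rho_\ell$.

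So the correct inductive step is: if some $Q_i$ is short, replace $R_{i-1}, Q_i, R_i$ by the single walk $R_{i-1}\cdot Q_i\cdot R_i$ of length $\le \rho_\ell + (2k+3)\rho_\ell + \rho_\ell = \rho_{\ell+1}$, obtaining a $\rho$-\snappath $P^{\star}$ with $\concat{P^{\star}}=\concat{P}$, one fewer $Q$-path, all $Q$-paths still subpaths of distinct geodesics of $\Pp$, and all walks of length at most $\rho_{\ell+1}$ (the walks not touched by the merge had length $\le\rho_\ell\le\rho_{\ell+1}$). Thus $P^{\star}$ satisfies the first two bullets at level $\ell+1$, and by the induction hypothesis applied at level $\ell+1$ there is $\ell'\in\{\ell+1,\ldots,k\}\subseteq\{\ell,\ldots,k\}$ and an $\ell'$-simplified \snappath $P'$ with $\concat{P'}=\concat{P^{\star}}=\concat{P}$, as desired. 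The main obstacle, which the definition of the $\rho_i$ sequence is tailored to overcome, is precisely controlling the growth of the walk lengths under merging: each merge multiplies the relevant length bound by the factor $2k+5$, and since at most $k$ merges can ever happen (each reduces the number of $Q$-paths, which starts at most $k$), the final level is at most $k$ and the final walk-length bound is at most $\rho_k$, which is finite and exactly the quantity appearing in the definition of a $k$-simplified \snappath.
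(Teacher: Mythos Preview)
Your final argument is correct and is exactly the paper's approach: repeatedly merge a single short $Q_i$ with its two neighbouring walks into one walk of length at most $\rho_\ell+(2k+3)\rho_\ell+\rho_\ell=(2k+5)\rho_\ell=\rho_{\ell+1}$, and terminate because each merge removes one $Q$-path, of which there are at most $k$. The paper phrases this as a direct iteration (introducing the term ``$\ell$-presimplified'' for the intermediate objects) rather than an induction, which sidesteps the base-case confusion you ran into; indeed, your stated induction hypothesis at level $\ell$ quantifies over \emph{all} objects with walks bounded by $\rho_\ell$, so the base case $\ell=k$ is not actually trivial as stated --- what saves you is the separate termination argument on the number of $Q$-paths in your last sentence, which is really the induction variable you should have used.
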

\begin{proof}
    Let an \emph{$\ell$-presimplified $\rho$-\snappath} be defined exactly as an $\ell$-simplified $\rho$-\snappath, except we do not require the last item of the definition --- the lower bound on the lengths of the paths $Q_i$. Thus, every $\rho$-\snappath, in particular also~$P$, is a $0$-presimplified $\rho$-\snappath. Denote $P_0\coloneqq P$ and perform the following iterative construction of $P_1,P_2,\ldots$, where each $P_i$ is an $i$-presimplified $\rho$-\snappath and $\concat{P}=\concat{P_1}=\concat{P_2}=\ldots$:
    \begin{itemize}
        \item If $P_i$ is an $i$-simplified $\rho$-\snappath, finish the construction by setting $P'\coloneqq P_i$.
        \item Otherwise, denoting $P_i=(R_0,Q_1,R_1,\ldots,R_{k'-1},Q_{k'},R_{k'})$, we have
        $|Q_j|\leq (2k+3)\rho_i$ for some $j\in \Set{1,\ldots,k'}$.
        Obtain $P_{i+1}$ from $P_i$ by replacing the walks $R_{j-1},Q_j,R_j$ with their concatenation $R_{j-1}\cdot Q_j\cdot R_j$. Observe that
        \begin{equation*}
            |R_{j-1}\cdot Q_j\cdot R_j|\leq \rho_i+(2k+3)\rho_i+\rho_i = \rho_{i+1},
        \end{equation*}
        so $P_i$ is indeed an $(i+1)$-presimplified $\rho$-\snappath.
    \end{itemize}
    Each step of the construction reduces the length of the considered sequence of walks by $2$. As this length is at most $2k+1$ at the beginning (for~$P_0$), the construction makes at most $k$ steps in total and finds an $\ell$-simplified \snappath $P'$ with $\concat{P}=\concat{P'}$, for some $\ell\in \{0,1,\ldots,k\}$.
\end{proof}

Let $\mathcal{P}$ be a $(k,\rho)$-geodesic-cover of a graph $G$ and $Q = (R_0 , Q_1 , R_1 , \dots , Q_{k'} , R_{k'})$ be an $\ell$-simplified $\rho$-\snappath visiting $k'$ geodesics.
For every $Q_i$, let $P_i \in \mathcal{P}$ be the geodesic $Q_i$ is a subpath of.
If $Q_i$ follows $P_i$ forwards, we write $\forward{P_i}$, otherwise we write $\backward{P_i}$.
This yields a tuple $(\overline{P}_1, \dots, \overline{P}_{k'},\ell)$ with $\overline{P}_i \in \Set{\forward{P_i},\backward{P_i}}$. We call this tuple the \emph{type} of $Q$, written $\type{Q}$.

\section{Geodesic-coverability and coarse pathwidth}

In this section, we prove our main result, \cref{thm:main}. We first observe that in geodesic-coverable graphs, every ball can be covered with a relatively small number of balls of radius $2\rho$.

\begin{lemma}\label{lem:balls-coverable}
    Let $G$ be a $\Brace{k,\rho}$-geodesic-coverable graph.
    Then for all $v \in \V{G}$, the ball
    $\ball{\ell\rho}{v}$
    is $\Brace{2k(\ell+1),2\rho}$-coverable.
\end{lemma}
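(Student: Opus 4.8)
The plan is to cover $\ball{\ell\rho}{v}$ indirectly: fix a $(k,\rho)$-geodesic-cover $\mathcal{P}=\{P_1,\dots,P_k\}$ of $G$, cover for each $i$ the portion of the geodesic $P_i$ that is ``seen'' from $\ball{\ell\rho}{v}$ by a few balls of radius $\rho$ centred on $P_i$, and then inflate all of these balls to radius $2\rho$ so as to absorb the snapping of the vertices of $\ball{\ell\rho}{v}$ onto $\mathcal{P}$.

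Concretely, for each $i\in\{1,\dots,k\}$ let $S_i \coloneqq \V{P_i}\cap \ball{(\ell+1)\rho}{v}$. The first step is to show that $S_i$ occupies only a short contiguous stretch of $P_i$: writing $P_i=x_0x_1\cdots x_m$, the fact that $P_i$ is a geodesic gives $\dist{G}{x_j,x_{j'}}=|j-j'|$ for all $j,j'$, whereas any two vertices of $S_i$ are at distance at most $2(\ell+1)\rho$ from each other by the triangle inequality through $v$; hence the indices $j$ with $x_j\in S_i$ lie in an interval of length at most $2(\ell+1)\rho$, so $S_i$ is contained in a subpath of $P_i$ spanning at most $2(\ell+1)\rho+1$ consecutive vertices. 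Since a subpath of $P_i$ is again a geodesic, a ball of radius $\rho$ around a vertex $x_j$ of $P_i$ contains the $2\rho+1$ consecutive vertices $x_{j-\rho},\dots,x_{j+\rho}$ of $P_i$, so this stretch --- and therefore $S_i$ --- is covered by at most $\lceil (2(\ell+1)\rho+1)/(2\rho+1)\rceil\le\ell+1$ balls of radius $\rho$ centred on $P_i$. (If $S_i$ is empty, no ball is needed for $P_i$.)

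It remains to collect these balls and verify coverage. Let $\mathcal{B}$ be the union over all $i$ of the families produced above; then $|\mathcal{B}|\le k(\ell+1)\le 2k(\ell+1)$, and I claim that the balls of radius $2\rho$ with the same centres cover $\ball{\ell\rho}{v}$. Indeed, given $u\in\ball{\ell\rho}{v}$, since $\mathcal{P}$ is a distance-$\rho$ geodesic-cover there is some $i$ and some $w\in\V{P_i}$ with $\dist{G}{u,w}\le\rho$; then $\dist{G}{w,v}\le\dist{G}{w,u}+\dist{G}{u,v}\le(\ell+1)\rho$, so $w\in S_i$, hence $w\in\ball{\rho}{c}$ for some $\ball{\rho}{c}\in\mathcal{B}$, and therefore $\dist{G}{u,c}\le\dist{G}{u,w}+\dist{G}{w,c}\le 2\rho$, i.e. $u\in\ball{2\rho}{c}$. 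This yields $\distDS{2\rho}{\ball{\ell\rho}{v},G}\le 2k(\ell+1)$, which is the statement.

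I do not anticipate a genuine obstacle. The only mild subtlety is the observation that intersecting a geodesic with a set of bounded diameter leaves only a short interval of it, together with the elementary counting of how many radius-$\rho$ balls are needed to cover that interval (if one prefers cruder estimates, allowing up to $2(\ell+1)$ balls per geodesic still meets the stated bound); one also has to be slightly careful to centre the covering balls on $P_i$ itself, so that the ``$2\rho+1$ consecutive vertices per ball'' claim applies.
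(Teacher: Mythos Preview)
Your proof is correct and follows essentially the same approach as the paper: intersect each geodesic $P$ with $\ball{(\ell+1)\rho}{v}$, observe that this intersection spans a subpath of $P$ of length at most $2(\ell+1)\rho$, cover it by a few radius-$\rho$ balls, and then inflate to radius $2\rho$ to absorb the snapping of $\ball{\ell\rho}{v}$ onto~$\mathcal{P}$. The only cosmetic difference is that the paper covers the short subpath by taking a maximal distance-$\rho$ independent set (yielding at most $2(\ell+1)$ centres per geodesic), whereas you place centres at regular spacing and thereby even get the slightly sharper bound $k(\ell+1)$.
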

\begin{proof}
    Let $\mathcal{P}$ be a $(k,\rho)$-geodesic-cover of $G$ and $v \in \V{G}$.
    For every $P\in \mathcal{P}$, let $I_P$ be an inclusionwise maximal distance-$\rho$ independent subset of $\V{P}\cap \ball{(\ell+1)\rho}{v}$. Let $a$ and $b$ be the first and the last vertex of $I_P$, respectively, in the order of appearance on $P$. By triangle inequality, we have
    $\dist{}{a,b}\leq \dist{}{a,v}+\dist{}{v,b}\leq 2(\ell+1)\rho.$
    Since $P$ is a geodesic, the subpath of $P$ between $a$ and $b$ has length at most $2(\ell+1)\rho$. As all the vertices of $I_P$ lie on this subpath and $I_P$ is distance-$\rho$ independent, we conclude that $|I_P|\leq 2(\ell+1)$.

    Let 
    	$D\coloneqq \bigcup_{P\in \mathcal{P}} I_P$; then
     $|D|\leq |\mathcal{P}|\cdot 2(\ell+1)=2k(\ell+1)$. Note that for each $P\in \cal P$, the maximality of $I_P$ implies that $I_P$ distance-$\rho$ dominates $\V{P}\cap \ball{(\ell+1)\rho}{v}$; and the union of the sets $\V{P}\cap \ball{(\ell+1)\rho}{v}$ over all $P\in \cal P$ distance-$\rho$ dominates $ \ball{\ell\rho}{v}$, as $\cal P$ is a $(k,\rho)$-geodesic-cover. Hence $D$ distance-$2\rho$ dominates $\ball{\ell\rho}{v}$, and thus $D$ witnesses that $\ball{\ell\rho}{v}$ is $(2k(\ell+1),2\rho)$-coverable.
\end{proof}

The next technical  lemma contains the core argument of the proof: two simplified \snappaths that start at the same vertex, have the same type, and have roughly the same length must lead to vertices that are not too far from each other.



\begin{lemma}
    \label{lem:same_start_and_same_type_impl_near_end}

    Let $G$ be a graph with a $(k,\rho)$-geodesic-cover $\mathcal{P}$.
	Suppose $C$ and $D$ are two $\ell$-simplified $\rho$-\snappaths such that
	\begin{itemize}
		\item $\Start{C}=\Start{D}$ and the first walk ($R_0$) on both $C$ and $D$ is of length $0$,
		\item $\type{C} = \type{D}$,
		\item $\Abs{\Abs{C} - \Abs{D}} \leq \gamma$, for some non-negative integer $\gamma$, and
        \item $C,D$ are $\delta$-almost shortest paths with $\delta = 4k\rho +2\rho_\ell+ 2(k-k')\rho_{\ell}$ where $k'$ is the number of geodesics in $\type{C}$.
	\end{itemize}
	Then $\dist{}{\End{C},\End{D}} \leq \gamma + k\cdot (4k\rho+2\rho_\ell + 2k\rho_{\ell}) +k\cdot (4\rho_{\ell})$.
\end{lemma}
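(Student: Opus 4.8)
The plan is to proceed by induction on $k'$, the number of geodesics visited (equivalently the length of the common type). Let me write $C = (R_0, Q_1, R_1, \dots, Q_{k'}, R_{k'})$ and $D = (S_0, T_1, S_1, \dots, T_{k'}, S_{k'})$, where $R_0, S_0$ have length $0$ so that $C$ and $D$ both start at the common vertex $u = \Start{C} = \Start{D}$. Since $\type C = \type D$, for each $i$ the subpaths $Q_i$ and $T_i$ lie on the \emph{same} geodesic $P_i \in \Pp$ and traverse it in the same direction ($\overline{P}_i$). The base case $k' = 0$ is trivial: both $C$ and $D$ consist only of a length-$0$ walk, so $\End C = \End D = u$.

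For the inductive step, the key geometric observation is that $Q_1$ and $T_1$ are both long subpaths (length $> (2k+3)\rho_\ell$) of the \emph{same} geodesic $P_1$, starting near the common point $u$ (at distance at most $\rho_\ell$ from $u$, via $R_0 \cdot$ nothing — actually $R_0$ has length $0$, so $\Start{Q_1} = u$, and likewise $\Start{T_1} = u$; wait, only the \emph{first} walk has length $0$, so in fact $\Start{Q_1} = \End{R_0} = u = \End{S_0} = \Start{T_1}$). So $Q_1$ and $T_1$ start at the \emph{same} vertex $u$ on $P_1$ and slide along $P_1$ in the same direction; hence one is a subpath of the other, and $\Start{Q_1} = \Start{T_1}$. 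The first thing I would do is bound $\bigl|\,|Q_1| - |T_1|\,\bigr|$: using that $C$ and $D$ are $\delta$-almost shortest, that $|C|$ and $|D|$ differ by at most $\gamma$, and that the ``tails'' $v'C := R_1 \cdot Q_2 \cdots R_{k'}$ and $v''D := S_1 \cdot T_2 \cdots S_{k'}$ are themselves almost-shortest walks whose lengths can be compared via the triangle inequality around their common-ish endpoints, I expect to get a bound of the form $\bigl|\,|Q_1| - |T_1|\,\bigr| \le \gamma + O(k\rho) + O(k\rho_\ell)$. Since $Q_1, T_1$ share the startpoint $\Start{Q_1}$ on the geodesic $P_1$ and go the same direction, $\dist{}{\End{Q_1}, \End{T_1}} = \bigl|\,|Q_1| - |T_1|\,\bigr|$ exactly.

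Now I would set up the induction: let $C' := (R_1', Q_2, R_2, \dots, Q_{k'}, R_{k'})$ where $R_1'$ is modified so its startpoint is moved from $\End{Q_1}$ to $\End{T_1}$ — concretely $R_1' := (\text{a shortest }\End{T_1}\text{-}\End{Q_1}\text{-path}) \cdot R_1$, whose length is at most $\dist{}{\End{Q_1},\End{T_1}} + \rho_\ell$; and $D' := (S_1, T_2, \dots, T_{k'}, S_{k'})$. These are $(\ell')$-simplified $\rho$-\snappaths for a suitable $\ell'$ — here there is a subtlety, since the new first walk $R_1'$ may be longer than $\rho_\ell$, so I may need to re-simplify via \cref{lem:every_snal_path_can_be_simplified} (this is why the statement's error terms are organised the way they are, and why $\delta$ is generous); alternatively one argues the first walk stays below the next threshold $\rho_{\ell+1}$ and treats $C', D'$ as $(\ell+1)$-simplified or absorbs the discrepancy into the recursion. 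I would make $C'$ and $D'$ start at the common vertex $\End{T_1}$ by prepending nothing (their first walks now do the ``connecting''); they visit $k' - 1$ geodesics ($P_2, \dots, P_{k'}$), they have the same type $(\overline P_2, \dots, \overline P_{k'}, \ell')$, their lengths differ by at most $\gamma' := \bigl|\,|Q_1| - |T_1|\,\bigr| + (\text{length of the modification of }R_1') + |R_0\text{-type slack}| = \gamma + O(k\rho + k\rho_\ell)$, and they are $\delta'$-almost shortest with $\delta'$ matching the formula for one fewer geodesic. Applying the inductive hypothesis gives $\dist{}{\End{C'},\End{D'}} \le \gamma' + (k-1)(\cdots) + (k-1)(4\rho_\ell)$, and since $\End{C'} = \End C$ and $\End{D'} = \End D$, combining with the per-level overhead $\gamma' - \gamma \le 4k\rho + 2k\rho_\ell + 4\rho_\ell$ (roughly) closes the induction to yield the claimed bound $\gamma + k(4k\rho + 2\rho_\ell + 2k\rho_\ell) + k(4\rho_\ell)$.

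\textbf{Main obstacle.} I expect the genuinely delicate point to be the bookkeeping in the inductive step: verifying that after shifting $R_1$'s startpoint and truncating off $Q_1, T_1$, the residual objects $C', D'$ are still \emph{legitimately} simplified \snappaths of the right parameter (the first walk might overshoot $\rho_\ell$, forcing a re-simplification that changes $\ell$, which in turn shifts all the thresholds $\rho_i$ and the ``long path'' requirement $> (2k+3)\rho_\ell$), and that the accumulated error per level is genuinely constant in the sense of not depending on the recursion depth in a way that blows up the final bound. The almost-shortest hypothesis with the specifically tuned $\delta = 4k\rho + 2\rho_\ell + 2(k-k')\rho_\ell$ is clearly engineered precisely so that after peeling one geodesic the hypothesis is exactly what's needed for $k' - 1$; making that match work out — i.e., showing the peeled-off tails $v'C, v''D$ are $\delta'$-almost shortest with the decremented $(k - k')$ replaced by $(k - (k'-1))$ — is the crux, and it relies on the geodesic $P_1$ ``straightening out'' the initial segment so that no slack is lost there beyond the contributions of the short connector walks $R_0, R_1, S_0, S_1$.
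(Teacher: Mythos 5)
There is a genuine gap, and it sits exactly at the step you flag as the starting point of your induction. You claim that since $Q_1$ and $T_1$ start at the same vertex $u$ and slide along the same geodesic $P_1$ in the same direction, one can bound $\Abs{\Abs{Q_1}-\Abs{T_1}}$ (equivalently $\dist{}{\End{Q_1},\End{T_1}}$) by $\gamma+\Oh{k\rho}+\Oh{k\rho_\ell}$ using the almost-shortest hypotheses and the triangle inequality. This is false: the two \snappaths may leave $P_1$ at arbitrarily distant points. Take two long parallel geodesics $P_1,P_2$ joined by many short rungs (a ladder within coverage radius $\rho$), with $u$ at one end of $P_1$ and both \snappaths ending near the far end of $P_2$; one may cross from $P_1$ to $P_2$ immediately and the other only after travelling an arbitrarily long stretch of $P_1$, yet both are genuinely shortest, have the same type and the same total length. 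The length gap $\Abs{\Abs{Q_1}-\Abs{T_1}}$ is then unbounded, so the connector $R_1'$ you prepend (a shortest $\End{T_1}$--$\End{Q_1}$ path) has unbounded length; it cannot be made admissible by re-simplification via \cref{lem:every_snal_path_can_be_simplified} (which only merges walks of length $\Oh{\rho_\ell}$ and would destroy the type), so the recursion does not close.

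The paper's proof avoids this by never trying to show the departure points from $P_1$ are close. Instead it re-aligns the two walks on the \emph{next} geodesic: with $x'=\Start{Q^C_2}$ and $y'=\Start{Q^D_2}$ both on $P_2$, the long detour that $D$ makes along $P_1$ is compensated, up to $\Oh{\rho_\ell}$, by the geodesic segment $x'P_2y'$ of $P_2$ (because both walks are $\delta$-almost shortest); so one either prepends $x'P_2y'$ to $Q^D_2$ or trims $Q^D_2$ at $x'$, producing two walks that start at the common vertex $x'$ and visit $k'-1$ geodesics, with only an additive $\Oh{\rho_\ell}+\delta$ loss. Two further ingredients you are missing are forced by this: the induction must be run on \emph{relaxed} simplified \snappaths (the trimming can make the new first subpath on $P_2$ shorter than the $(2k+3)\rho_\ell$ threshold), and one interleaving case --- where $Q^D_2$ ends strictly between $y'$ and $x'$ on $P_2$ --- has to be excluded by a separate argument showing it would force $\Abs{Q^D_2}\leq(2k+3)\rho_\ell$, contradicting the length lower bound. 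Your outer skeleton (induction on $k'$, peeling the first geodesic, the tuned $\delta$) matches the paper, but without the re-alignment on $P_2$ the central estimate is simply not available.
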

\begin{proof}
    By reversing the geodesics of $\mathcal{P}$ if necessary, without loss of generality we may assume that $\type{C} = \Brace{\forward{P_1},\forward{P_2},\dots,\forward{P_{k'}}} = \type{D}$.
	For $F\in \{C,D\}$, denote
    \begin{equation*}
        F\,=\,R^F_0,\,Q^F_1,\,R^F_1,\,Q^F_2,\,R^F_2,\,\ldots,\, R^F_{k'-1},\, Q^F_{k'},\, R^F_{k'}, \qquad\text{where}
    \end{equation*}
    \begin{itemize}
    \item $\Abs{R^C_0}=\Abs{R^D_0}=0$ and $\Abs{R^C_i},\Abs{R^D_i}\leq \rho_\ell$, for $i\in \{1,\ldots,k'\}$;
    \item $Q^C_i$ and $Q^D_i$ are subpaths of $P_i$, oriented the same way as $P_i$; and
    \item $\Abs{Q^C_i},\Abs{Q^D_i}> (2k+2)\rho_\ell$, for all $i\in \{1,\ldots,k'\}$.
    \end{itemize}
    We prove by induction over $k'$, that is, the number of geodesics the \snappaths $C$ and $D$ visit, that $\dist{}{\End{C},\End{D}} \leq \gamma+k'(4k\rho +2\rho_\ell+ 2k\rho_{\ell}) + k'(4\rho_{\ell})$.
    We conduct the induction on slightly relaxed objects --- \emph{relaxed $\ell$-simplified $\rho$-\snappaths} --- in which we allow the subpaths $Q^C_1$ and $Q^D_1$ to be arbitrarily short (in other words, the assertion $\Abs{Q^C_i},\Abs{Q^D_i}> (k+2)\rho_\ell$ only needs to hold for $i\geq 2$); clearly, this holds for the initial input.
    Let $\rho' \coloneqq \rho_{\ell}$ and $u \coloneqq \Start{C} = \Start{D}$. Note that $u\in \V{P_1}$ and $u=\Start{Q^C_1}=\Start{Q^D_1}$.

    In the case $k'=0$ we have that $u=\End{C}=\End{D}$.
    Consider then the case $k' = 1$.
    Let $x \coloneqq \End{Q^C_1}$ and $y \coloneqq \End{Q^D_1}$.
    Without loss of generality, we assume that $x$ occurs on $P_1$ not later than $y$.
    By assumption, we have $\Abs{R^C_1},\Abs{R^D_1} \leq \rho'$.
    As we have $\gamma \geq \Abs{\Abs{D}-\Abs{C}} = \Abs{(\Abs{xP_1\,y}+\Abs{R^D_1})-\Abs{R^C_1}} \geq \Abs{xP_1\,y} - \rho'$, we obtain
    \begin{equation*}
        \dist{}{\End{C},\End{D}} \leq \Abs{R^C_1} + \Abs{xP_1\,y} + \Abs{R^D_1} \leq \gamma + 3\rho'.
    \end{equation*}

    We proceed to the main case $k' > 1$.
    Again, let $x \coloneqq \End{Q^C_1}=\Start{R^C_1}$ and $y \coloneqq \End{Q^D_1}=\Start{R^D_1}$ and assume, without loss of generality, that $x$ occurs on $P_1$ not later than $y$ does.
    Additionally, let $x' \coloneqq \End{R^C_1}=\Start{Q^C_2}$ and $y' \coloneqq \End{R^D_1}=\Start{Q^D_2}$; note that $x'$ and $y'$ lie on $P_2$.
    We distinguish three different cases: \textbf{(1)} $y'$ lies on $P_2$ before $x'$ and $z \coloneqq \End{Q^D_2}$ lies between them, \textbf{(2)} $y'$ lies on $P_2$ before $x'$ and $z$ lies after $x'$, and \textbf{(3)} $y'$ occurs on $P_2$ at the same point or after $x'$.

	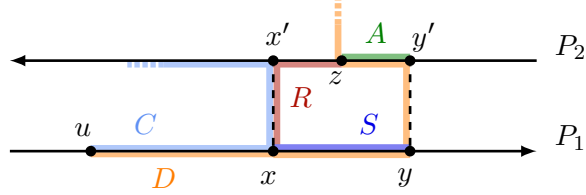
\begin{figure}[!ht]
		\centering
		\begin{tikzpicture}[scale=1.2]
			\tikzstyle{geodesic} = [directededge]
			\tikzstyle{connector} = [edge,dashed]
			\colorlet{Pcolor}{myLightBlue!70!blue}
			\colorlet{Qcolor}{myOrange}
			\node (centre) at (0,0) {};
			\node (P-1-left) at ($(centre) + (-3,0)$) {};
			\node (P-1-right) at ($(centre) + (3,0)$) {};
			\node (P-2-left) at ($(P-1-left) + (0,1)$) {};
			\node (P-2-right) at ($(P-1-right) + (0,1)$) {};
			
			\node (P-1-label) at ($(P-1-right)+(30:0.3)$) {$P_1$};
			\node (P-2-label) at ($(P-2-right)+(30:0.3)$) {$P_2$};
			
			\node[vertex] (u) at ($(centre)+(-2,0)$) {};
			\node (u-label) at ($(u)+(110:0.25)$) {$u$};
			\node[vertex] (x) at ($(centre)+(0,0)$) {};
			\node (x-label) at ($(x)+(260:0.3)$) {$x$};
			\node[vertex] (y) at ($(x)+(1.5,0)$) {};
			\node (y-label) at ($(y)+(260:0.3)$) {$y$};
			\node[vertex] (x-p) at ($(x)+(0,1)$) {};
			\node (x-p-label) at ($(x-p)+(80:0.3)$) {$x'$};
			\node[vertex] (y-p) at ($(y)+(0,1)$) {};
			\node (y-p-label) at ($(y-p)+(60:0.3)$) {$y'$};
			\node[vertex] (z) at ($(x-p)!0.5!(y-p)$) {};
			\node (z-label) at ($(z)+(250:0.2)$) {$z$};
			
			\node (C-label) at ($(u)!0.3!(x)+(90:0.3)$) {\textcolor{Pcolor}{$C$}};
			\node (D-label) at ($(u)!0.4!(x)+(270:0.3)$) {\textcolor{Qcolor}{$D$}};
			
			\draw[geodesic] (P-1-left) to (P-1-right);
			\draw[geodesic] (P-2-right) to (P-2-left);
			\draw[connector] (x) to (x-p);
			\draw[connector] (y) to (y-p);
			
			\doublemarkedpath{u}{x}{180}{0}{Pcolor}{Qcolor}
			\markedpathbelow{x}{y}{180}{0}{Qcolor}
			\markedpathleft{y}{y-p}{270}{90}{Qcolor}
			\markedpathbelow{y-p}{z}{0}{180}{Qcolor}
			\markedpathleft{z}{$(z)+(0,0.4)$}{270}{90}{Qcolor}
			\markedpathleftDASH{$(z)+(0,0.4)$}{$(z)+(0,0.7)$}{270}{90}{Qcolor}
			\markedpathleft{x}{x-p}{270}{90}{Pcolor}
			\markedpathbelow{x-p}{$(x-p)+(-1.2,0)$}{0}{180}{Pcolor}
			\markedpathbelowDASH{$(x-p)+(-1.2,0)$}{$(x-p)+(-1.6,0)$}{0}{180}{Pcolor}
			
			\colorlet{Acolor}{myGreen}
			\markedpathabove{y-p}{z}{0}{180}{Acolor}
			\node (A-label) at ($(y-p)!0.5!(z)+(90:0.3)$) {\textcolor{Acolor}{$A$}};
			
			\colorlet{Rcolor}{myRed}
			\markedpathbelow{z}{x-p}{0}{180}{Rcolor}
			\markedpathright{x}{x-p}{270}{90}{Rcolor}
			\node (R-label) at ($(x-p)!0.4!(x)+(0:0.3)$) {\textcolor{Rcolor}{$R$}};
			
			\colorlet{Scolor}{myBlue}
			\markedpathabove{x}{y}{180}{0}{Scolor}
			\node (S-label) at ($(x)!0.7!(y)+(90:0.3)$) {\textcolor{Scolor}{$S$}};
		\end{tikzpicture}
		\caption{Illustration of case $\textbf{(1)}.$}
		\label{fig:case-1}
	\end{figure}

    In case \textbf{(1)}, see \cref{fig:case-1} as an illustration, we define $A \coloneqq y'D\, z = Q^D_2$, $R \coloneqq R^C_1 \cdot x'P_2 z$ and $S \coloneqq xP_1 y$.
    Because $y'P_2x'$ is a shortest path and $\Abs{R^C_1},\Abs{R^D_1}\leq \rho'$, we get $\Abs{R} - \rho' + \Abs{A} \leq \Abs{S} + 2\rho'$, and thus, $\Abs{R} \leq \Abs{S} + 3\rho' - \Abs{A}.$
    On the other hand, because $D$ is a $\delta$-almost shortest path, we obtain $\Abs{S} + \Abs{A} - \delta \leq \Abs{R}$.
    So we have
    \begin{align*}
        \Abs{S} + \Abs{A} - \delta \leq \Abs{S} + 3\rho' - \Abs{A},& \qquad \textrm{implying}\\
        \Abs{Q^D_2} = \Abs{A} \leq \frac{3\rho' + \delta}{2} \leq \frac{(2k+5)\rho' + 2k\rho}{2}\leq (2k+3)\rho'. &
    \end{align*}
    This contradicts the assumption that $D$ is a relaxed $\ell$-simplified $\rho$-\snappath. Hence, we conclude that this case simply cannot happen.

	\begin{figure}[!ht]
		\centering
		\begin{tikzpicture}[scale=1.2]
			\tikzstyle{geodesic} = [directededge]
			\tikzstyle{connector} = [edge,dashed]
			\colorlet{Pcolor}{myLightBlue!70!blue}
			\colorlet{Qcolor}{myOrange}
			\colorlet{PPcolor}{myBlue}
			\colorlet{QQcolor}{myRed}
			\node (centre) at (0,0) {};
			\node (P-1-left) at ($(centre) + (-3,0)$) {};
			\node (P-1-right) at ($(centre) + (3,0)$) {};
			\node (P-2-left) at ($(P-1-left) + (0,1)$) {};
			\node (P-2-right) at ($(P-1-right) + (0,1)$) {};
			
			\node (P-1-label) at ($(P-1-right)+(30:0.3)$) {$P_1$};
			\node (P-2-label) at ($(P-2-right)+(30:0.3)$) {$P_2$};
			
			\node[vertex] (u) at ($(centre)+(-2,0)$) {};
			\node (u-label) at ($(u)+(110:0.25)$) {$u$};
			\node[vertex] (x) at ($(centre)+(0,0)$) {};
			\node (x-label) at ($(x)+(260:0.3)$) {$x$};
			\node[vertex] (y) at ($(x)+(1,0)$) {};
			\node (y-label) at ($(y)+(260:0.3)$) {$y$};
			\node[vertex] (x-p) at ($(x)+(0,1)$) {};
			\node (x-p-label) at ($(x-p)+(80:0.3)$) {$x'$};
			\node[vertex] (y-p) at ($(y)+(0,1)$) {};
			\node (y-p-label) at ($(y-p)+(60:0.3)$) {$y'$};
			
			\node (C-label) at ($(u)!0.3!(x)+(90:0.3)$) {\textcolor{Pcolor}{$C$}};
			\node (D-label) at ($(u)!0.4!(x)+(270:0.3)$) {\textcolor{Qcolor}{$D$}};
			
			\draw[geodesic] (P-1-left) to (P-1-right);
			\draw[geodesic] (P-2-right) to (P-2-left);
			\draw[connector] (x) to (x-p);
			\draw[connector] (y) to (y-p);
			
			\doublemarkedpath{u}{x}{180}{0}{Pcolor}{Qcolor}
			\markedpathbelow{x}{y}{180}{0}{Qcolor}
			\markedpathright{y}{y-p}{270}{90}{Qcolor}
			\markedpathleft{x}{x-p}{270}{90}{Pcolor}
			\markedpathabove{y-p}{$(x-p)+(-1,0)$}{0}{180}{Qcolor}
			\markedpathaboveDASH{$(x-p)+(-1,0)$}{$(x-p)+(-1.4,0)$}{0}{180}{Qcolor}
			\markedpathbelow{x-p}{$(x-p)+(-1.2,0)$}{0}{180}{Pcolor}
			\markedpathbelowDASH{$(x-p)+(-1.2,0)$}{$(x-p)+(-1.6,0)$}{0}{180}{Pcolor}
			
			\colorlet{Acolor}{myGreen}
			\markedpathabove{x}{y}{0}{180}{Acolor}
			\markedpathleft{y}{y-p}{270}{90}{Acolor}
			\markedpathbelow{x-p}{y-p}{0}{180}{Acolor}
			\node (A-label) at ($(y-p)!0.5!(y)+(180:0.3)$) {\textcolor{Acolor}{$A$}};
			
			\draw[PPcolor,thin] ($(x-p)+(0,-0.07)$) to ($(x-p)+(-1.2,-0.07)$);
			\draw[PPcolor,thin,dashed,dash pattern=on 2pt off 1pt] ($(x-p)+(-1.2,-0.07)$) to ($(x-p)+(-1.6,-0.07)$);
			\node at ($(x-p)+(-1.6,-0.07)+(225:0.3)$) {\textcolor{PPcolor}{$C'$}};
			
			\draw[QQcolor,thin] ($(x-p)+(0,0.07)$) to ($(x-p)+(-1.2,0.07)$);
			\draw[QQcolor,thin,dashed,dash pattern=on 2pt off 1pt] ($(x-p)+(-1.2,0.07)$) to ($(x-p)+(-1.6,0.07)$);
			\node at ($(x-p)+(-1.6,0.07)+(135:0.3)$) {\textcolor{QQcolor}{$D'$}};
		\end{tikzpicture}
		\caption{Illustration of case $\textbf{(2)}.$}
		\label{fig:case-2}
	\end{figure}
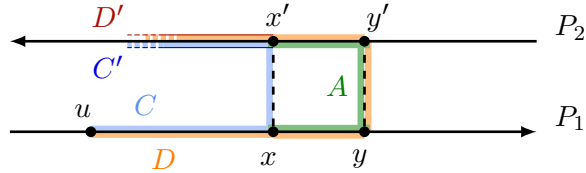

    We continue with case \textbf{(2)}.
    Here, we define $A \coloneqq xD\,x'$, see \cref{fig:case-2} for an illustration.
    As $Q$ is a $\delta$-almost shortest walk and $\Abs{R^C_1}=\Abs{xC\,x'}\leq \rho'$, we get $\Abs{A} \leq \rho' + \delta$.
    We define \snappaths:
    \begin{itemize}[nosep]
    \item $C' \coloneqq x'C$, i.e., $C'$ is the suffix of $C$ starting from $Q^C_2$; and 
    \item $D' \coloneqq x'D$, i.e., $D'$ is the suffix of $D$ starting from $Q^D_2$, but with $Q^D_2$ trimmed to $x'Q^D_2$.
    \end{itemize}
    Note that both $C'$ and $D'$ are again relaxed $\ell$-simplified $\rho$-\snappaths (here we make use of them being relaxed, as we shorten the subpaths of $D$ on $P_2$, which now is the first geodesic the walks visit) and visit $k' -1$ geodesics of $\mathcal{P}$.
    As $C'$ is a subwalk of $C$ and $D'$ is a subwalk of $D$, both are again $\delta$-almost shortest walks.
    Additionally, we obtain $\Abs{\Abs{C'}-\Abs{D'}} \leq \gamma + \Abs{\Abs{A}-\Abs{R^C_1}}\leq  \gamma+ \delta + 2\rho'$.
    Thus, by induction, we obtain that
    \begin{align*}
        \dist{}{\End{C},\End{D}} &\leq \gamma + \delta + 2\rho' + (k'-1)(4k\rho +2\rho'+ 2k\rho') +(k'-1)(4\rho') \\
        &\leq \gamma + k'(4k\rho + 2\rho'+ 2k\rho') + k'(4\rho').
    \end{align*}

	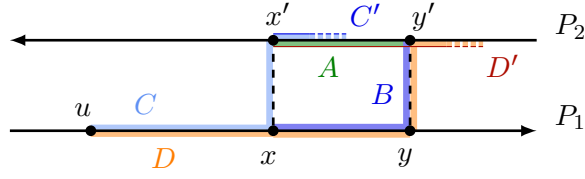
\begin{figure}[!ht]
		\centering
		\begin{tikzpicture}[scale=1.2]
			\tikzstyle{geodesic} = [directededge]
			\tikzstyle{connector} = [edge,dashed]
			\colorlet{Pcolor}{myLightBlue!70!blue}
			\colorlet{Qcolor}{myOrange}
			\colorlet{PPcolor}{myBlue}
			\colorlet{QQcolor}{myRed}
			\node (centre) at (0,0) {};
			\node (P-1-left) at ($(centre) + (-3,0)$) {};
			\node (P-1-right) at ($(centre) + (3,0)$) {};
			\node (P-2-left) at ($(P-1-left) + (0,1)$) {};
			\node (P-2-right) at ($(P-1-right) + (0,1)$) {};
			
			\node (P-1-label) at ($(P-1-right)+(30:0.3)$) {$P_1$};
			\node (P-2-label) at ($(P-2-right)+(30:0.3)$) {$P_2$};
			
			\node[vertex] (u) at ($(centre)+(-2,0)$) {};
			\node (u-label) at ($(u)+(110:0.25)$) {$u$};
			\node[vertex] (x) at ($(centre)+(0,0)$) {};
			\node (x-label) at ($(x)+(260:0.3)$) {$x$};
			\node[vertex] (y) at ($(x)+(1.5,0)$) {};
			\node (y-label) at ($(y)+(260:0.3)$) {$y$};
			\node[vertex] (x-p) at ($(x)+(0,1)$) {};
			\node (x-p-label) at ($(x-p)+(80:0.3)$) {$x'$};
			\node[vertex] (y-p) at ($(y)+(0,1)$) {};
			\node (y-p-label) at ($(y-p)+(60:0.3)$) {$y'$};
			
			\node (C-label) at ($(u)!0.3!(x)+(90:0.3)$) {\textcolor{Pcolor}{$C$}};
			\node (D-label) at ($(u)!0.4!(x)+(270:0.3)$) {\textcolor{Qcolor}{$D$}};
			
			\draw[geodesic] (P-1-left) to (P-1-right);
			\draw[geodesic] (P-2-right) to (P-2-left);
			\draw[connector] (x) to (x-p);
			\draw[connector] (y) to (y-p);
			
			\doublemarkedpath{u}{x}{180}{0}{Pcolor}{Qcolor}
			\markedpathbelow{x}{y}{180}{0}{Qcolor}
			\markedpathright{y}{y-p}{270}{90}{Qcolor}
			\markedpathleft{x}{x-p}{270}{90}{Pcolor}
			\markedpathabove{x-p}{$(x-p)+(0.4,0)$}{180}{0}{Pcolor}
			\markedpathaboveDASH{$(x-p)+(0.4,0)$}{$(x-p)+(0.8,0)$}{180}{0}{Pcolor}
			\markedpathbelow{y-p}{$(y-p)+(0.4,0)$}{180}{0}{Qcolor}
			\markedpathbelowDASH{$(y-p)+(0.4,0)$}{$(y-p)+(0.8,0)$}{180}{0}{Qcolor}
			
			\colorlet{Acolor}{myGreen}
			\markedpathbelow{x-p}{y-p}{180}{0}{Acolor}
			\node (A-label) at ($(x-p)!0.4!(y-p)+(270:0.3)$) {\textcolor{Acolor}{$A$}};
			
			\colorlet{Bcolor}{myBlue}
			\markedpathabove{x}{y}{180}{0}{Bcolor}
			\markedpathleft{y}{y-p}{270}{90}{Bcolor}
			\node (B-label) at ($(y-p)!0.6!(y)+(180:0.3)$) {\textcolor{Bcolor}{$B$}};
			
			\draw[PPcolor,thin] ($(x-p)+(0,0.07)$) to ($(x-p)+(0.4,0.07)$);
			\draw[PPcolor,thin,dashed,dash pattern=on 2pt off 1pt] ($(x-p)+(0.4,0.07)$) to ($(x-p)+(0.8,0.07)$);
			\node at ($(x-p)+(0.8,0.07)+(45:0.3)$) {\textcolor{PPcolor}{$C'$}};
			
			\draw[QQcolor,thin] ($(x-p)+(0,-0.07)$) to ($(y-p)+(0.4,-0.07)$);
			\draw[QQcolor,thin,dashed,dash pattern=on 2pt off 1pt] ($(y-p)+(0.4,-0.07)$) to ($(y-p)+(0.8,-0.07)$);
			\node at ($(y-p)+(0.8,-0.07)+(315:0.3)$) {\textcolor{QQcolor}{$D'$}};
		\end{tikzpicture}
		\caption{Illustration of case $\textbf{(3)}.$}
		\label{fig:case-3}
	\end{figure}

    Finally, we consider case \textbf{(3)}, where we define $A \coloneqq x'P_2 y'$ and $B \coloneqq xD\,y'$, see \cref{fig:case-3} for an illustration.
    In this case we define \snappaths:
    \begin{itemize}
        \item $C' \coloneqq x'C$, i.e., $C'$ is the suffix of $C$ starting from $Q^C_2$; and
        \item $D' \coloneqq A \cdot y'D$, i.e., $D'$ is the suffix of $D$ starting from $Q^D_2$, but with $Q^D_2$ extended by prepending~$A$.
    \end{itemize}
    Note that thus, both $C'$ and $D'$ are relaxed $\ell$-simplified $\rho$-\snappaths that start at $x'$ and visit $k'-1$ geodesics of $\Pp$.
    As $P_2$ is a geodesic, we know that $A$ is a shortest path and hence $\Abs{A} \leq \Abs{B} + \Abs{R^C_1}\leq \Abs{B}+\rho'$. On the other hand, $xBy$ is a shortest path as well, hence $\Abs{B}=\Abs{xBy}+\Abs{R^D_1}\leq \Abs{R^C_1}+\Abs{A}+2\Abs{R^D_1}\leq \Abs{A}+3\rho'$.
    Thus $\Abs{\Abs{A}-\Abs{B}} \leq 3\rho'$, and
    \begin{align*}
        \Abs{(\Abs{uP_1x} + \Abs{R^C_1} + \Abs{C'}) - (\Abs{uP_1x} + \Abs{B} + \Abs{D'} - \Abs{A})} \leq \gamma,& \qquad\textrm{implying}\\
        \Abs{\Abs{C'} - \Abs{D'}} \leq \gamma + 4\rho'.&
    \end{align*}
    As $xD$ is $\delta$-almost shortest, by $\Abs{A}\leq \Abs{B}+\rho'$ we obtain that $R^C_1 \cdot D'$ is $(\delta + 2\rho')$-almost shortest, which in turn implies that $D'$ is $(\delta + 2\rho')$-almost shortest as well.
    $C'$ as a subpath of $C$ is again $\delta$-almost shortest.
    Note that $(\delta + 2\rho') \leq 4k\rho + 2\rho'+(k-(k'-1))2\rho'$.
    So we may apply induction to $C'$ and $D'$ to  obtain~that
    \begin{align*}
        \dist{}{\End{C},\End{D}} &\leq \gamma + 4\rho' + (k'-1)(4k\rho +2\rho'+ 2k\rho') +(k'-1)(4\rho') \\
        &\leq \gamma + k'(4k\rho +2\rho'+ 2k\rho') + k'(4\rho').
    \end{align*}

    Note that, indeed, none of the cases uses any lower bound on $\Abs{Q^C_1}$ or $\Abs{Q^D_1}$, so relaxed $\ell$-simplified $\rho$-\snappaths suffice.
\end{proof}

We have the following easy consequence that concerns \snappaths with the same start vertex, but not necessarily lying on a geodesic from $\Pp$.

\begin{restatable}{corollary}{notOnGeodesic}
    \label{corr:same_type_impl_near_end}
	Let $G$ be a graph with a $(k,\rho)$-geodesic-cover $\mathcal{P}$. Suppose $C$ and $D$ are two $\ell$-simplified $\rho$-\snappaths with
	\begin{itemize}
		\item $\Start{C}=\Start{D}$,
		\item $\type{C} = \type{D}$,
		\item $\Abs{\Abs{C} - \Abs{D}} \leq \gamma$ for some non-negative integer $\gamma$, and
        \item $C,D$ are $\delta$-almost shortest paths, where $\delta = 4k\rho$.
	\end{itemize}
	Then $\dist{}{\End{C},\End{D}} \leq \gamma+k(4k\rho + 2\rho_\ell +2k\rho_{\ell})+(6k+4)\rho_{\ell}$.
\end{restatable}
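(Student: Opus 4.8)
The plan is to reduce \cref{corr:same_type_impl_near_end} to \cref{lem:same_start_and_same_type_impl_near_end} by surgically altering the initial portions of $C$ and $D$ so that the two extra hypotheses of that lemma --- that the first walk $R_0$ has length $0$ on both, and that $C$ and $D$ start at the same vertex --- become literally true, at the cost of a controlled deterioration of the remaining parameters. Write $C=(R^C_0,Q^C_1,R^C_1,\dots,Q^C_{k'},R^C_{k'})$ and $D=(R^D_0,Q^D_1,R^D_1,\dots,Q^D_{k'},R^D_{k'})$; since $\type{C}=\type{D}$, the two \snappaths visit the same number $k'$ of geodesics. The case $k'=0$ is immediate: then $C=(R^C_0)$ and $D=(R^D_0)$ are single walks of length at most $\rho_\ell$ sharing a start vertex, so $\dist{}{\End{C},\End{D}}\le\Abs{R^C_0}+\Abs{R^D_0}\le2\rho_\ell$, which is within the claimed bound. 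Now assume $k'\ge1$. Let $P_1\in\mathcal P$ be the first geodesic visited by both $C$ and $D$; by $\type{C}=\type{D}$ it is the same geodesic traversed the same way, which (reversing $P_1$ in $\mathcal P$ if necessary) we take to be forwards. Put $u\coloneqq\Start{C}=\Start{D}$, $u_C\coloneqq\Start{Q^C_1}$ and $u_D\coloneqq\Start{Q^D_1}$; these are vertices of $P_1$, and as $R^C_0$ and $R^D_0$ are walks of length at most $\rho_\ell$ from $u$ to $u_C$ and $u_D$, we get $\dist{}{u,u_C},\dist{}{u,u_D}\le\rho_\ell$, hence $\dist{}{u_C,u_D}\le\Abs{R^C_0}+\Abs{R^D_0}\le2\rho_\ell$, this distance being attained by the subpath of the geodesic $P_1$ between $u_C$ and $u_D$. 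Swapping $C$ and $D$ if necessary --- the statement is symmetric in them --- assume $u_C$ occurs on $P_1$ no later than $u_D$.

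Now build two new simplified \snappaths. Let $C^\circ$ be $C$ with the walk $R^C_0$ deleted; it starts at $u_C$, its first walk is trivial, and it is plainly still a genuine $\ell$-simplified $\rho$-\snappath. Let $D^+$ be obtained from $D$ by first deleting $R^D_0$ and then prolonging its first geodesic segment $Q^D_1=u_DP_1v_D$ (where $v_D\coloneqq\End{Q^D_1}$) backwards along $P_1$ so that it starts at $u_C$; that is, the first walk of $D^+$ is the trivial walk at $u_C$, its first geodesic segment is $u_CP_1v_D$, and the remaining terms $R^D_1,Q^D_2,R^D_2,\dots,Q^D_{k'},R^D_{k'}$ are those of $D$ unchanged. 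The point of prolonging \emph{along $P_1$}, rather than splicing in a shortest connector, is that since $u_C$ precedes $u_D$ on $P_1$, the new first segment $u_CP_1v_D$ is a subpath of $P_1$ that is only \emph{longer} than $Q^D_1$; hence every geodesic segment of $D^+$ still has length exceeding $(2k+3)\rho_\ell$, no geodesic of $\mathcal P$ is visited twice, every connector walk of $D^+$ still has length at most $\rho_\ell$, and the first walk has length $0$, so $D^+$ is again a genuine $\ell$-simplified $\rho$-\snappath. By construction, $C^\circ$ and $D^+$ start at the common vertex $u_C$ with first walk of length $0$, end where $C$ respectively $D$ end (only the beginnings were changed), satisfy $\type{C^\circ}=\type{C}=\type{D}=\type{D^+}$, and visit exactly $k'$ geodesics.

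It remains to check the quantitative hypotheses of \cref{lem:same_start_and_same_type_impl_near_end} and apply it. Since $u_CP_1v_D$ is a subpath of the geodesic $P_1$ passing through $u_D$, one computes $\Abs{C^\circ}=\Abs{C}-\Abs{R^C_0}$ and $\Abs{D^+}=\Abs{D}-\Abs{R^D_0}+\dist{}{u_C,u_D}$, so each of $\Abs{C^\circ}$ and $\Abs{D^+}$ differs from $\Abs{C}$ respectively $\Abs{D}$ by at most $2\rho_\ell$, whence $\Abs{\Abs{C^\circ}-\Abs{D^+}}\le\gamma+4\rho_\ell$; put $\gamma'\coloneqq\gamma+4\rho_\ell$. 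For the almost-shortest requirement: $C^\circ$ is a subwalk of $C$, hence $4k\rho$-almost shortest; and for $D^+$, writing $w\coloneqq\End{D}$, the bounds $\Abs{D^+}\le\Abs{D}+\Abs{R^C_0}\le\dist{}{u,w}+4k\rho+\rho_\ell$ and $\dist{}{u_C,w}\ge\dist{}{u,w}-\rho_\ell$ give $\Abs{D^+}-\dist{}{u_C,w}\le4k\rho+2\rho_\ell$, while $\Abs{D^+}\ge\Abs{D}-\rho_\ell$ and $\dist{}{u_C,w}\le\dist{}{u,w}+\rho_\ell\le\Abs{D}+\rho_\ell$ give $\Abs{D^+}-\dist{}{u_C,w}\ge-2\rho_\ell$; so $D^+$ is $(4k\rho+2\rho_\ell)$-almost shortest. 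Since $4k\rho+2\rho_\ell\le4k\rho+2\rho_\ell+2(k-k')\rho_\ell$ and both $C^\circ,D^+$ visit exactly $k'$ geodesics, the $\delta$-hypothesis of the lemma holds with its value $\delta=4k\rho+2\rho_\ell+2(k-k')\rho_\ell$. Applying \cref{lem:same_start_and_same_type_impl_near_end} to $C^\circ$ and $D^+$ with slack $\gamma'$, and recalling $\End{C^\circ}=\End{C}$ and $\End{D^+}=\End{D}$, gives
\[
\dist{}{\End{C},\End{D}}\le\gamma'+k(4k\rho+2\rho_\ell+2k\rho_\ell)+4k\rho_\ell=\gamma+k(4k\rho+2\rho_\ell+2k\rho_\ell)+(4k+4)\rho_\ell,
\]
which is at most the claimed $\gamma+k(4k\rho+2\rho_\ell+2k\rho_\ell)+(6k+4)\rho_\ell$.

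The main obstacle is structural rather than computational. The naive fix --- delete $R^C_0$ and $R^D_0$ and splice in a shortest connector between the new start vertices $u_C$ and $u_D$ --- produces a first walk of length up to $2\rho_\ell$, violating the bound $\rho_\ell$ demanded of connector walks in an $\ell$-simplified \snappath; and trimming one first geodesic segment down to the later of $u_C,u_D$ risks pushing its length below the $(2k+3)\rho_\ell$ threshold, so the result would only be what the proof of \cref{lem:same_start_and_same_type_impl_near_end} calls a \emph{relaxed} simplified \snappath, whereas the \emph{statement} of that lemma requires genuine ones. Prolonging backwards along $P_1$ avoids both pitfalls simultaneously, and the remaining delicate point is confirming that the extra almost-shortest slack this costs --- at most $2\rho_\ell$ --- is exactly what the lemma can still afford, the inequality $4k\rho+2\rho_\ell\le4k\rho+2\rho_\ell+2(k-k')\rho_\ell$ being tight precisely in the worst case $k'=k$. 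Everything else is routine bookkeeping.
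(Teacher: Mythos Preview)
Your proof is correct and follows essentially the same approach as the paper's: handle $k'=0$ separately, then for $k'\ge1$ delete both initial connector walks and prolong one of the first geodesic segments backwards along $P_1$ so that both modified \snappaths start at the earlier of the two entry points on $P_1$, then invoke \cref{lem:same_start_and_same_type_impl_near_end} with the resulting parameters. The only cosmetic difference is that the paper assumes the opposite WLOG ordering (so it prolongs $C$'s first segment rather than $D$'s), and your final bound $\gamma+k(4k\rho+2\rho_\ell+2k\rho_\ell)+(4k+4)\rho_\ell$ is a regrouping of the paper's $\gamma+k(4k\rho+2k\rho_\ell)+(6k+4)\rho_\ell$ --- these are literally equal, and both are at most the stated bound.
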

\begin{proof}
	Consider first the corner case $k'=0$. Then both $C$ and $D$ have length at most $\rho'$, hence by triangle inequality we have  $\dist{}{\End{C},\End{D}}\leq 2\rho_\ell$.
	
	So from now on assume that $k'\geq 1$.
	As $C$ and $D$ have the same type, there is a $P_1 \in \mathcal{P}$ such that the first two walks on $C$ and $D$, call them $R^C_0$ and $R^D_0$, have their ends on $P_1$. We also have $\Abs{R^C_0},\Abs{R^D_0}\leq \rho_\ell$.
	We may assume without loss of generality that $C$ and $D$ follow $P_1$ in the direction of $P_1$, and that $x \coloneqq \End{R^C_0}$ lies on $P_1$ not before $y \coloneqq \End{R^D_0}$.
	
	We define two new \snappaths:
	\begin{itemize}
		\item $C' \coloneqq y P_1 x C$, that is, $C'$ is the suffix of $C$ after $R^C_0$, but with the path $yP_1x$ prepended to the walk after $R^C_0$.
		\item $D'\coloneqq yD$, that is, $D'$ is the suffix of $D$ after $R^D_0$.
	\end{itemize}
	Clearly, both $C'$ and $D'$ are $\ell$-simplified \snappaths that visit $k'$ geodesics of $\Pp$. Note since $P_1$ is a geodesic, we have
	\[\dist{}{x,y}=\Abs{xP_1\,y}\leq \Abs{R^C_0}+\Abs{R^D_0}\leq 2\rho_\ell.\]
	In particular, both $xC$ and $xD$ are $\delta$-almost shortest walks due to being suffixes of $\delta$-almost shortest walks, hence $C'$ and $D'$ are both $(\delta+2\rho_\ell)$-almost shortest walks.
	Further, observe that
	\begin{align*}
		\Abs{\Abs{C'} - \Abs{D'}} ={} & \Abs{(\Abs{C}-\Abs{R^C_0}+\Abs{yP_1x}) - (\Abs{D}+\Abs{R^D_0})}\\
		={} & \Abs{\Abs{C}-\Abs{D}-\Abs{R^C_0}+\Abs{R^D_0}+\dist{}{x,y}}\\
		\leq{} &  \Abs{\Abs{C}-\Abs{D}}+2\Abs{R^C_0}+2\Abs{R^D_0}\leq \gamma+4\rho_{\ell}.
	\end{align*}
	Thus, we may apply \cref{lem:same_start_and_same_type_impl_near_end} to $C'$ and $D'$. From this we conclude that
	\begin{align*}\dist{}{\End{C'},\End{D'}} \leq{} & (\gamma + 4\rho_{\ell}) + k(4k\rho +2\rho_\ell+ 2k\rho_{\ell}) +k(4\rho_{\ell})\\
		={} &\gamma+k(4k\rho + 2k\rho_{\ell})+(6k+4)\rho_{\ell}.\end{align*}
	This finishes the proof as $\End{C} = \End{C'}$ and $\End{D} = \End{D'}$.
\end{proof}

In the next lemma, we use the tools developed so far to show that every BFS layer is well-coverable.

\begin{lemma}\label{lem:sphereCoverable}
	Let $G$ be a $(k,\rho)$-geodesic-coverable graph.
	Then for every $u \in \V{G}$ and every non-negative integer $d$, the set
			$\sphere{d}{u} \coloneqq \Set{v \in \V{G} \mid \dist{}{u,v} = d}$
	is $\Brace{260\cdot 14^k\cdot k^{2k+4},2\rho}$-coverable.
\end{lemma}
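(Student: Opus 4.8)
The plan is to classify the vertices of $\sphere{d}{u}$ according to the \emph{type} of a simplified \snappath leading to them from $u$, to argue that only $k^{\Oh{k}}$ types occur, to show that within a single type all the vertices lie in one ball of radius $k^{\Oh{k}}\rho$, and finally to invoke \cref{lem:balls-coverable} to re-cover each such ball by $k^{\Oh{k}}$ balls of radius $2\rho$.

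First I would fix a $(k,\rho)$-geodesic-cover $\Pp$ of $G$ and, for every $v\in\sphere{d}{u}$, take a shortest $u$-$v$-path $P_v$, so $\Abs{P_v}=d$. Applying \cref{lem:every_path_can_be_snapped} to $P_v$ gives a $\rho$-\snappath with endpoints $u$ and $v$ and length within $4\rho k$ of $d$; applying \cref{lem:every_snal_path_can_be_simplified} to it produces an $\ell_v$-simplified $\rho$-\snappath $Q_v$ with $\Start{Q_v}=u$, $\End{Q_v}=v$ and $\Abs{\Abs{Q_v}-d}\leq 4\rho k$. Since $\dist{}{u,v}=d$, each $Q_v$ is in particular a $(4k\rho)$-almost shortest walk. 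Let $\tau_v\coloneqq\type{Q_v}$. A type is a tuple $(\overline{P}_1,\dots,\overline{P}_{k'},\ell)$ with $\ell\in\{0,\dots,k\}$ and $\overline{P}_1,\dots,\overline{P}_{k'}$ a sequence of $k'\leq k$ pairwise distinct, oriented geodesics from $\Pp$; hence the number of types is at most $(k+1)\sum_{k'=0}^{k}\frac{k!}{(k-k')!}\,2^{k'}\leq 2(k+1)\,2^k k^k$.

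Next I would group $\sphere{d}{u}$ into the parts $S_\tau\coloneqq\Set{v\in\sphere{d}{u}\mid\tau_v=\tau}$ and pick, for each nonempty part, a representative $v_\tau\in S_\tau$. For any $w\in S_\tau$, the \snappaths $Q_{v_\tau}$ and $Q_w$ are $\ell$-simplified $\rho$-\snappaths for the same $\ell$ (the one recorded in $\tau$), they start in $u$, have equal type, satisfy $\Abs{\Abs{Q_{v_\tau}}-\Abs{Q_w}}\leq 8\rho k$, and are $(4k\rho)$-almost shortest. Thus \cref{corr:same_type_impl_near_end} applies with $\gamma=8\rho k$ and $\delta=4k\rho$, yielding $\dist{}{w,v_\tau}=\dist{}{\End{Q_w},\End{Q_{v_\tau}}}\leq r$, where $r\coloneqq 8\rho k+k(4k\rho+2\rho_\ell+2k\rho_\ell)+(6k+4)\rho_\ell$. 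Using $\rho_\ell\leq\rho_k=(2k+5)^k(2\rho+1)$ (and that we may assume $\rho\geq1$, so $2\rho+1\leq3\rho$) one bounds $r\leq m\rho$ for the least integer $m$ with $m\rho\geq r$, and $m\leq c\,k^3(2k+5)^k$ for an absolute constant $c$. Hence $S_\tau\subseteq\ball{r}{v_\tau}\subseteq\ball{m\rho}{v_\tau}$, and by \cref{lem:balls-coverable} the ball $\ball{m\rho}{v_\tau}$, and therefore also $S_\tau$, is $(2k(m+1),2\rho)$-coverable.

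Finally, summing over the at most $2(k+1)2^k k^k$ types, $\sphere{d}{u}$ is $(N,2\rho)$-coverable with $N\leq 2(k+1)2^k k^k\cdot 2k(m+1)$, which is of the form $14^k\cdot k^{\Oh{k}}$. I expect the only genuine subtlety to be this last bookkeeping: the two exponential-in-$k$ factors — the number of types ($\approx 2^k k^k$) and the number of $2\rho$-balls needed per representative ball (bounded by $(2k+5)^k$ times a polynomial in $k$) — multiply to something containing $2^k(2k+5)^k=(4k+10)^k\leq(14k)^k=14^k k^k$, and one has to choose the absolute constants carefully (in particular tracking the geometric blow-up $\rho_\ell=(2k+5)^\ell(2\rho+1)$ through $r$ and $m$) so that the product collapses exactly to $260\cdot14^k\cdot k^{2k+4}$. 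Verifying the hypotheses of \cref{corr:same_type_impl_near_end}, on the other hand, is routine, the key point being that snapping a shortest path and then simplifying it changes its length by at most $4\rho k$ (\cref{lem:every_path_can_be_snapped,lem:every_snal_path_can_be_simplified}) while fixing its endpoints.
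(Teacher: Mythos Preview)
Your proposal is correct and follows essentially the same approach as the paper: snap a shortest $u$-$v$-path via \cref{lem:every_path_can_be_snapped}, simplify via \cref{lem:every_snal_path_can_be_simplified}, group the vertices of $\sphere{d}{u}$ by the type of the resulting simplified \snappath, bound the diameter of each group via \cref{corr:same_type_impl_near_end}, and then re-cover each resulting ball by $2\rho$-balls using \cref{lem:balls-coverable}. The only cosmetic differences are in the bookkeeping (the paper bounds the number of types by $(2k)^{k+1}$ rather than your $2(k+1)2^kk^k$, and absorbs the $\gamma=8k\rho$ term into the radius estimate rather than tracking it separately), but the structure of the argument is identical.
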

\begin{proof}
    For every $v \in \sphere{d}{u}$ we fix a shortest $u$-$v$-path $P_v$ of length $d$.
    We also fix a $\rho$-\snappath $Q^\circ_v$ corresponding to $P_v$ in the sense of \cref{lem:every_path_can_be_snapped}. Note that  we have $d\leq \Abs{Q^\circ_v}\leq d+4k\rho$. We may then apply \cref{lem:every_snal_path_can_be_simplified} to find an $\ell$-simplified \snappath $Q_v$ with $\concat{Q_v}=\concat{Q^\circ_v}$, for some $\ell\in \Set{0,1,\ldots,k}$. Note that $Q_v$ is a $4k\rho$-almost shortest path.
    Let $\mathcal{Q} \coloneqq \Set{Q_v \colon v \in \sphere{d}{u}}$ and $\mathcal{T} \coloneqq \Set{\type{Q} \colon Q \in \mathcal{Q}}.$

    For every type $t \in \mathcal{T}$, we now fix an endvertex $x_t$ of some path $Q \in \mathcal{Q}$ with $\type{Q} = t$.
    By \cref{corr:same_type_impl_near_end,lem:every_snal_path_can_be_simplified}, $\type{Q_v} = t$ entails that $\End{P_v}=\End{Q_v} \in \ball{k(4k\rho + 2k\rho_{k})+(6k+4)\rho_{k}}{x_t}.$
    Thus, $\sphere{d}{u}$ is coverable by $\Abs{\mathcal{T}} \leq (2k)^{k+1}$ balls of radius $k(4k\rho + 2k\rho_{k})+(6k+4)\rho_{k}\leq 16k^2\rho_k\leq 48k^2(2k+5)^k\cdot \rho$.
    
    Now, we infer from  \cref{lem:balls-coverable}, that every such ball is again $(2k(48k^2(2k+5)^k+1),2\rho)$-coverable, so also $(100k^3(7k)^k,2\rho)$-coverable.
    Thus, $\sphere{d}{u}$ is $\Brace{(2k)^{k+1}\cdot 100k^3(7k)^k,2\rho}$-coverable, hence also $\Brace{200\cdot 14^k\cdot k^{2k+4},2\rho}$-coverable.
\end{proof}

We can now prove our main result, which we restate for convenience.
Note that in the adopted terminology, the properties expected from $C_1,\ldots, C_\ell$ just assert that it is a distance-$2\rho$ path-partition-decomposition of width $k^{\Oh{k}}$.

\thmMain*
\begin{proof}
    Write $\eta\coloneqq 200\cdot 14^k\cdot k^{2k+4}$ for brevity, and note that $\eta\in k^{\Oh{k}}$.
	Without loss of generality, we may assume that $G$ is connected; otherwise, we may apply the reasoning to every connected component of $G$ separately and naturally combine the outcomes. Fix an arbitrary vertex $u$ of $G$ and for $i=1,2,3,\ldots$,~define
    \begin{equation*}
        C_i\coloneqq \Set{v\in V(G)\mid (i-1)\cdot 2\rho \leq \dist{}{u,v}<i\cdot 2\rho}.
    \end{equation*}
    Letting $\ell$ be the largest integer such that $C_\ell$ is non-empty, we see that $C_1,\ldots, C_\ell$ is a distance-$2\rho$ path-partition-decomposition of $G$. Further, for each $i\in \Set{1,\ldots,\ell}$ we have, by \cref{lem:sphereCoverable}, that $\sphere{(i-1)\cdot 2\rho}{u}$ is $\Brace{\eta,2\rho}$-coverable. Since every vertex of $C_i$ is at distance at most $2\rho$ from some vertex of $\sphere{(i-1)\cdot 2\rho}{u}$, we conclude that $C_i$ is $\Brace{\eta,4\rho}$-coverable, hence also $\Brace{10k\eta,2\rho}$-coverable by \cref{lem:balls-coverable}. So $C_1,\ldots,C_\ell$ is a distance-$2\rho$ path-partition-decomposition of $G$ of width at most $10k\eta\in k^{\Oh{k}}$, as required.

    To construct the quasi-isometry, let $I$ be a maximal distance-$4\rho$ independent set in $G$ and let $H\coloneqq \DistGraph{4\rho}{I}{G}$ be the $(I,4\rho)$-distance graph of $G$.
    Further, let $\hat{H}\coloneqq \whDistGraph{4\rho}{I}{G}$ be the graph obtained from $H$ by replacing every edge by a path of length $12\rho$.
    By \cref{thm:distance-graph}, $G$ is $(3,12\rho)$-quasi-isometric to $\hat{H}$, hence it remains to prove that $\pw{\hat{H}}\leq k^{\Oh{k}}$ and $\Delta(\hat{H})\leq 26k$. Since subdividing edges does not change the pathwidth and the maximum degree (assuming they are at least $2$), it suffices to show that (i) $\pw{H}\leq k^{\Oh{k}}$ and (ii) $\Delta(H)<26k$.
    
    For the first property, for $i\in \Set{1,\ldots,\ell}$ we set 
        $D_i\coloneqq (C_i\cup C_{i+1}\cup C_{i+2}\cup C_{i+3})\cap I.$
    Recall that each set $C_i$ is $\Brace{10k\eta,2\rho}$-coverable, hence each set $D_i$ is $\Brace{40k\eta,2\rho}$-coverable. As $D_i$ is also a distance-$4\rho$ independent set, from \cref{lem:is-ds-comparison} we conclude that $\Abs{D_i}\leq 40k\eta\in k^{\Oh{k}}$. It remains to argue that $D_1,\ldots,D_\ell$ is a path decomposition of $H$. The only non-trivial check is that whenever vertices $u,v\in I$ are adjacent in $H$, there is some $t$ such that $u,v\in D_t$. Recall that $u,v$ being adjacent in $H$ entails $\dist{G}{u,v}\leq 12\rho$. Hence, if $i$ and $j$ are such that $u\in C_i$ and $v\in C_j$, then $|i-j|\leq 3$. It follows that $u,v\in D_t$ where $t=\min(i,j)$.

    Finally, for the second property, consider any  $u\in I$. By the definition of the distance graph, for every neighbour $v$ of $u$ in $H$ we have $\dist{G}{u,v}\leq 12\rho$. So $\Set{v\in I \mid \dist{H}{u,v}\leq 1}$ is a distance-$4\rho$ independent set contained in $\ball{12\rho}{v}$. By \cref{lem:balls-coverable}, $\ball{12\rho}{v}$ is $(26k,2\rho)$-coverable. From \cref{lem:is-ds-comparison} we now infer that
    $|\Set{v\in I \mid \dist{H}{u,v}\leq 1}|\leq 26k$,
    so the degree of $u$ in $H$ is smaller than~$26k$.
\end{proof}

\section{Algorithms}\label{sec:algorithms}

In this \namecref{sec:algorithms}, we present two algorithms on graphs of bounded distance tree-partition width.
The first one finds the largest size of a distance-$2\rho$ independent set; the second finds the smallest size of a distance-$\rho$ dominating set.

\thmDistIS*
\begin{proof}
    Let $\Tt=(T,\Set{C_x\colon x\in \V{T}})$ be the provided tree-partition-decomposition of $G$. For a node $x\in \V{T}$ and a vertex set $S\subseteq \V{G}$, we denote
    \[S[x]\coloneqq \Set{u\in S \mid \dist{}{u,C_x}\leq \rho}.\]
    We first observe that for any distance-$2\rho$ independent set $I$, the sets $I[x]$ are all~small.

    \begin{claim}\label{cl:is-small-vicinity}
       For every distance-$2\rho$ independent set $I$ in $G$, we have
       \begin{equation*}
       		|I[x]|\leq (\Delta+1)k\qquad\textrm{for all }x\in \V{T}.
       	\end{equation*}
    \end{claim}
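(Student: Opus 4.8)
The plan is to prove the claim by localising $I[x]$ to at most $\Delta+1$ bags of the decomposition and then bounding the intersection of $I$ with each bag separately.

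First I would establish that every vertex of $I[x]$ lies in the bag $C_x$ or in a bag $C_y$ with $y$ adjacent to $x$ in $T$. Indeed, take $u\in I[x]$; then $\dist{}{u,C_x}\le\rho$, so there is some $v\in C_x$ with $\dist{}{u,v}\le\rho$, and let $y$ be the unique node of $T$ with $u\in C_y$. Since $\Tt$ is a distance-$\rho$ tree-partition-decomposition, the inequality $\dist{}{u,v}\le\rho$ forces $y=x$ or $y$ adjacent to $x$ in $T$. As $T$ has maximum degree at most $\Delta$, the node $x$ has at most $\Delta$ neighbours, so $I[x]\subseteq C_x\cup\bigcup_y C_y$, where $y$ ranges over the at most $\Delta$ neighbours of $x$ in $T$; this is a union of at most $\Delta+1$ bags.

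Next I would bound $\Abs{I\cap C_z}$ for each such bag $C_z$. The set $I\cap C_z$ is a distance-$2\rho$ independent subset of $C_z$, since any subset of a distance-$2\rho$ independent set is again distance-$2\rho$ independent; hence $\Abs{I\cap C_z}\le\distIS{2\rho}{C_z,G}$. As the width of $\Tt$ is at most $k$, we have $\distDS{\rho}{C_z,G}\le k$, and \cref{lem:is-ds-comparison} then yields $\distIS{2\rho}{C_z,G}\le k$. So $\Abs{I\cap C_z}\le k$ for each of the at most $\Delta+1$ relevant bags, and summing over these bags gives $\Abs{I[x]}\le(\Delta+1)k$, as required.

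There is no genuine difficulty here: the argument is a direct combination of the defining property of a distance-$\rho$ tree-partition-decomposition, the degree bound on $T$, and \cref{lem:is-ds-comparison}. The only point worth stressing — and the reason the bound carries the factor $\Delta+1$ rather than just $k$ — is that $I[x]$ need not be contained in the single bag $C_x$: a vertex of $I$ at distance at most $\rho$ from $C_x$ may itself sit in a neighbouring bag, so one must account for all of $x$'s neighbours.
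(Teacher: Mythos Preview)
Your proof is correct and follows essentially the same approach as the paper: localise $I[x]$ to the at most $\Delta+1$ bags $C_y$ with $\dist{T}{x,y}\le 1$, then use the $(k,\rho)$-coverability of bags together with \cref{lem:is-ds-comparison}. The only cosmetic difference is that the paper first takes the union of these bags, observes it is $(k(\Delta+1),\rho)$-coverable, and applies \cref{lem:is-ds-comparison} once to $I[x]$ as a whole, whereas you apply the lemma to each bag separately and sum; both yield the same bound.
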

    \begin{claimproof}
        Since $\Tt$ is a distance-$\rho$ tree-partition-decomposition of $G$, we have
        \begin{equation*}
        	I[x]\subseteq \bigcup \Set{C_y\mid \dist{T}{x,y}\leq 1}.
        \end{equation*}
        By assumption, there are at most $\Delta+1$ parts $C_y$ considered on the right-hand side above, and each of them is $(k,\rho)$-coverable; so their union is $(k(\Delta+1),\rho)$-coverable. Hence, $I[x]$ is a distance-$2\rho$ independent set that is $(k(\Delta+1),\rho)$-coverable, so $|I[x]|\leq k(\Delta+1)$ by \cref{lem:is-ds-comparison}.    
    \end{claimproof}

    The idea of the algorithm is to construct a maximum independent set $I$ by a bottom-up dynamic programming over $T$, where the states at node $x$ correspond to candidates for the sets $I[x]$. \cref{cl:is-small-vicinity} ensures that we may only consider candidates of size at most $k(\Delta+1)$. We proceed to formal~details.

    Let us root $T$ in an arbitrary node $r$; this imposes the parent-child and ancestor-descendant relations in $T$. For a node $x\in \V{T}$, we denote
    \[V_x\coloneqq C_x\cup \bigcup \{C_y\colon y\textrm{ is a descendant of }x\textrm{ in }T\}\]
    and
    \begin{align*}
    	\Ii_x\coloneqq \{A\subseteq \V{G}\mid A\textrm{ is distance-}2\rho\textrm{ independent and\phantom{$\}.$}}\\
    	\textrm{distance-}\rho\textrm{ dominated by }C_x\}.
    \end{align*}
    By \cref{cl:is-small-vicinity}, every member of $\Ii_x$ has cardinality at most $k(\Delta+1)$, hence we may construct all the families $\Ii_x$ in total time $n^{\bigO{\Delta k}}$. For every $A\in \Ii_x$, we define the following quantity:
    \begin{align*}
    	\Fkt{\Phi_x}{A}\coloneqq \max \{|W|\colon W\subseteq V_x, \dist{}{W,C_x}>\rho,\textrm{ and \phantom{$\}\}\}\}\}$.}}\\
    	A\cup W\textrm{ is distance-}2\rho\textrm{ independent}\}.
    \end{align*}
    Sets $W$ over which the maximum ranges in the definition above will be called {\em{extensions}} of $A$.
    Clearly, we have
    \begin{equation}\label{eq:turtle}
    	\distIS{2\rho}{G}=\max \Set{|A|+\Fkt{\Phi_r}{A}\colon A\in \Ii_r},
    \end{equation}
    and $\Fkt{\Phi_x}{A}=0$ for all $A\in \Ii_x$ whenever $x$ is a leaf of $T$. It remains to show how to compute the values $\Fkt{\Phi_x}{\cdot}$ for non-leaf nodes of $T$ in a bottom-up manner. For this, consider the following definition: for two nodes $x,y$ of $T$, we say that $A\in \Ii_x$ and $B\in \Ii_y$ are \emph{compatible} if
    \[A\cup B\textrm{ is distance-}2\rho\textrm{ independent}\qquad\textrm{and}\qquad B[x]=A\cap B=A[y].\]

    \begin{claim}\label{cl:transitionIS}
        For every $x\in \V{T}$ and $A\in \Ii_x$, we have
        \[\Fkt{\Phi_x}{A}=\sum_{y\colon \textrm{child of }x} \max \Set{|B\setminus A|+\Fkt{\Phi_y}{B}\colon B\in \Ii_y\textrm{ and }A\textrm{ and }B\textrm{ are compatible}}.\]
    \end{claim}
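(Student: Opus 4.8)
The plan is to prove the two inequalities between the left-hand side and the right-hand side separately, exploiting the fact that the "vicinities" $V_y$ of distinct children $y$ of $x$ are pairwise far apart once we remove the immediate boundary controlled by $C_x$.

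First I would set up the key geometric observation: if $y_1\neq y_2$ are two distinct children of $x$, then any vertex $w_1\in V_{y_1}$ with $\dist{}{w_1,C_{y_1}}>\rho$ — wait, more to the point — if $w_1\in V_{y_1}$ and $w_2\in V_{y_2}$ with $\dist{}{w_1,w_2}\leq 2\rho$, then a shortest $w_1$-$w_2$-path must pass through $C_x$ (since in the tree-partition-decomposition every edge, hence every unit step of a shortest path, moves between equal or adjacent parts, and the only way to get from the subtree below $y_1$ to the subtree below $y_2$ is through $x$). Consequently at least one of $w_1,w_2$ lies within distance $\rho$ of $C_x$, i.e.\ lies in $V(G)[x]$. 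This means that once we have fixed the "interface" $A=I[x]$, the parts of an extension lying strictly outside the $\rho$-vicinity of $C_x$ but inside different $V_{y}$'s cannot conflict with each other; they can only conflict with $A$. This decoupling is what makes the sum over children correct, and establishing it cleanly is the main obstacle — one has to be careful that "distance-$2\rho$ independent" is checked across all pairs, including one vertex in $V_{y_1}$ and one in $C_x\cup(\text{stuff above }x)$, but the latter is handled precisely by the compatibility condition $B[x]=A\cap B$.

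For the inequality $\Fkt{\Phi_x}{A}\leq \text{(RHS)}$: take an optimal extension $W$ of $A$, so $W\subseteq V_x$, $\dist{}{W,C_x}>\rho$, and $A\cup W$ is distance-$2\rho$ independent. Partition $W=\bigsqcup_{y} W_y$ where $W_y\coloneqq W\cap V_y$ over the children $y$ of $x$ (this is a genuine partition since the $V_y$ are disjoint and cover $V_x\setminus C_x$, and $W$ avoids $C_x$). For each child $y$, set $B_y\coloneqq A[y]\cup W_y$. One checks that $B_y\in \Ii_y$: it is distance-$2\rho$ independent as a subset of $A\cup W$, and it is distance-$\rho$ dominated by $C_y$ — the part $A[y]$ by definition, and each vertex of $W_y$ because $W_y\subseteq V_y$ and, using the above observation together with $\dist{}{W_y,C_x}>\rho$, any $w\in W_y$ has its closest approach to $C_x$ blocked, forcing $\dist{}{w,C_y}\leq\rho$ — actually the cleanest route is: $w\in V_y$ means a shortest path from $w$ to any vertex above $y$ meets $C_y$, and since $\dist{}{w,C_x}>\rho$ and $C_x$ sits "above" $C_y$ at tree-distance $1$... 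I would verify $\dist{}{w,C_y}\le\rho$ via the decomposition directly. Then $A$ and $B_y$ are compatible: $A\cup B_y$ is distance-$2\rho$ independent, and $B_y[x]=A\cap B_y=A[y]$ because $W_y$ contributes nothing to the $\rho$-vicinity of $C_x$. Finally $W_y$ is an extension of $B_y$ (it lies in $V_y$, avoids the $\rho$-vicinity of $C_y$, and $B_y\cup W_y=B_y$ is independent — note $W_y=B_y\setminus A[y]$, and $|W_y|=|B_y\setminus A|$ since $A\cap B_y=A[y]\supseteq$ the overlap). Hence $|W_y|\le |B_y\setminus A|+\Fkt{\Phi_y}{B_y}$ and summing over $y$ gives $|W|=\sum_y|W_y|\le\text{RHS}$.

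For the reverse inequality $\Fkt{\Phi_x}{A}\geq\text{(RHS)}$: for each child $y$ pick $B_y\in\Ii_y$ compatible with $A$ attaining the maximum, and pick an optimal extension $W_y$ of $B_y$. Put $W\coloneqq\bigcup_y\big((B_y\setminus A)\cup W_y\big)$. I claim $W$ is an extension of $A$ with $|W|=\sum_y(|B_y\setminus A|+\Fkt{\Phi_y}{B_y})$. Disjointness of the summands across different $y$ follows because $(B_y\setminus A)\cup W_y\subseteq V_y$ and the $V_y$ are disjoint; within a single $y$, $B_y\setminus A$ and $W_y$ are disjoint since $W_y$ avoids the $\rho$-vicinity of $C_y$ while $B_y\setminus A=B_y\setminus A[y]$ is distance-$\rho$ dominated by $C_y$. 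Next, $W\subseteq V_x$ and $\dist{}{W,C_x}>\rho$: for $W_y$ this is part of being an extension — wait, $W_y$ avoids the $\rho$-vicinity of $C_y$, and I need it to avoid the $\rho$-vicinity of $C_x$; but $W_y\subseteq V_y$ so any path to $C_x$ goes through $C_y$, hence $\dist{}{W_y,C_x}\ge\dist{}{W_y,C_y}+1>\rho$ — and for $B_y\setminus A$, compatibility gives $B_y[x]=A\cap B_y$, so no vertex of $B_y\setminus A$ is within $\rho$ of $C_x$. Finally $A\cup W$ is distance-$2\rho$ independent: pairs within a single $(B_y\setminus A)\cup W_y\cup A$-block are fine because $B_y\cup W_y$ is independent and $A\cup B_y$ is independent (and $W_y$ extends $B_y$); cross-block pairs, say $w_1\in V_{y_1}$, $w_2\in V_{y_2}$ with $y_1\ne y_2$, cannot be at distance $\le 2\rho$ by the geometric observation combined with $\dist{}{w_i,C_x}>\rho$; pairs between $A$ and a block are exactly the compatibility condition for that $y$. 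Therefore $W$ is a valid extension and $\Fkt{\Phi_x}{A}\ge|W|$, completing the proof.
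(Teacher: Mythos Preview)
Your overall architecture matches the paper's --- split an optimal extension over the children for one inequality, and glue together per-child optima for the other --- but the first direction contains a genuine error in how you define $B_y$.

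You set $W_y\coloneqq W\cap V_y$ and then $B_y\coloneqq A[y]\cup W_y$, and try to argue that $B_y\in\Ii_y$, i.e.\ that every vertex of $W_y$ is at distance at most $\rho$ from $C_y$. This is simply false: a vertex of $W$ can sit arbitrarily deep inside $V_y$, far from $C_y$ (think of a vertex in $C_{y'}$ for $y'$ a great-grandchild of $y$). Your own text reveals the contradiction: two sentences after asserting $\dist{}{w,C_y}\le\rho$ you claim ``$W_y$ avoids the $\rho$-vicinity of $C_y$'' in order to call $W_y$ an extension of $B_y$. Both cannot hold unless $W_y=\emptyset$. The correct split, which the paper uses, is $B_y\coloneqq (A\cup W)[y]$ --- only the part of $A\cup W$ that is genuinely within distance $\rho$ of $C_y$ --- and then the extension of $B_y$ is $(W\cap V_y)\setminus B_y$. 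With that choice $B_y\in\Ii_y$ is immediate, compatibility follows as you sketched, and the cardinalities add up to $|W_y|$.

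In the reverse direction your argument is nearly complete but has a gap in the independence check: the case $u\in A\setminus B_y$ and $v\in W_y$ is not covered by ``$B_y\cup W_y$ independent'' nor by ``$A\cup B_y$ independent'' (compatibility), and it is not a cross-block pair either. You need a separate argument here: any $u$--$v$ path must pass through $C_y$ (since $v\in V_y\setminus C_y$ and $u$ lives in a part at tree-distance $\le 1$ from $x$, hence outside the subtree strictly below $y$), and then $\dist{}{u,C_y}>\rho$ (from $u\notin B_y=A[y]$ via compatibility) together with $\dist{}{v,C_y}>\rho$ forces the path to have length greater than $2\rho$. The paper handles exactly this case explicitly.
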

    \begin{claimproof}
        Let $M$ be the value of the right-hand side of the claimed equality.
        First, let $W^\star$ be a maximum-size extension of $A$ and let $I^\star \coloneqq A\cup W^\star$. For each child $y$ of $X$, let
        \[B_y\coloneqq I^\star[y]\qquad\textrm{and}\qquad W_y\coloneqq (I^\star\cap V_y)\setminus B_y.\]
        It is straightforward to verify from the definitions that
        \begin{itemize}[nosep]
            \item $\Set{B_y\setminus A,W_y\colon y\textrm{ is a child of }x}$ is a partition of $W_x$; and
            \item for each child $y$ of $x$, $B_y\in \Ii_y$, $A$ and $B_y$ are compatible, and $W_y$ is an extension of $B_y$. 
        \end{itemize}
        Therefore, we have 
        \[\Fkt{\Phi_x}{A}=|W^\star|=\sum_{y\colon \textrm{child of }x} |B_y\setminus A|+|W_y|\leq M.\]
        This establishes the inequality in one direction.

        For the other direction, for every child $y$ of $x$ let us fix a set $B_y\in \Ii_y$ compatible with $A$ and an extension $W_y$ of $B_y$ that maximize $|B_y\setminus A|+|W_y|$. Thus $M=\sum_{y\colon \textrm{child of }x} |B_y\setminus A|+|W_y|$. To show that $\Fkt{\Phi_x}{A}\leq M$, it suffices to prove~that
        \[W\coloneqq \bigcup_{y\colon \textrm{child of }x} (B_y\setminus A)\cup W_y\]
        is an extension of $A$. That $W\subseteq V_x$ and $\dist{}{W,C_x}>\rho$ follows directly from the compatibility of $A$ and each $B_y$ and the assumption that $\Tt$ is a distance-$\rho$ tree-partition-decomposition. So it remains to argue that $A\cup W$ is a distance-$2\rho$ independent~set.

        Consider any distinct $u,v\in A\cup W$. If $u,v\in A\cup B_y$ for some child $y$ of $x$, then $\dist{}{u,v}> 2\rho$ because $A\cup B_y$ is distance-$2\rho$ independent (by compatibility). Further, if $u,v\in B_y\cup W_y$ for some child $y$ of $x$, then again $\dist{}{u,v}> 2\rho$ as $B_y\cup W_y$ is distance-$2\rho$ independent. We are left with two~cases.

        First, suppose that for some child $y$ of $x$, we have $u\in A\setminus B_y$ and $v\in W_y$. As $W_y$ is an extension of $B_y$, we have $v\in W_y\subseteq V_y\setminus C_y$ and $\dist{}{v,C_y}>\rho$. In particular, the node $y'$ of $T$ such that $v\in C_{y'}$ is a strict descendant of $y$. On the other hand, since $u\in A\setminus B_y$ and $A$ and $B_y$ are compatible, we have $\dist{}{u,C_y}>\rho$ and the node $x'$ of $T$ such that $u\in C_{x'}$ much be either equal to $x$ or adjacent to $x$ in $T$. In particular, the path in $T$ connecting $x'$ and $y'$ must pass through~$y$, so every path in $G$ connecting $u$ with $v$ must pass through $C_y$. Let then $P$ be a shortest $u$-$v$-path and let $w$ be a vertex of $C_y$ on $P$. Since $\dist{}{u,C_y}>\rho$, the prefix of $P$ between $u$ and $w$ must be of length larger than~$\rho$. Similarly, since $\dist{}{v,C_y}>\rho$, the suffix of $P$ between $w$ and $v$ must be of length larger than~$\rho$. We conclude that the length of $P$ is larger than $2\rho$, so~$\dist{}{u,v}>2\rho$.

        We are left with the following case: for some distinct children $y,z$ of $x$, we have $u\in (B_y\setminus A)\cup W_y$ and $v\in (B_z\setminus A)\cup W_z$. By the compatibility of $A$ and $B_y$, we have $\dist{}{u,C_x}>\rho$ and the node $y'$ such that $u\in C_{y'}$ is either equal to $y$ or is a descendant of $y$ in $T$. Similarly, $\dist{}{v,C_x}>\rho$ and the node $z'$ such that $v\in C_{z'}$ is either equal to $z$ or is a descendant of $z$. Therefore, the $y'$-$z'$-path in $T$ passes through $x$, so every $u$-$v$-path in $G$ must pass through~$C_x$. In particular, if $P$ is a shortest $u$-$v$-path, then $P$ must contain a vertex $w$ belonging to $C_x$. Then the $u$-$w$-prefix of $P$ must be of length larger than~$\rho$, due to $\dist{}{u,C_x}>\rho$, and similarly the $w$-$v$-suffix of $P$ must be of length larger than $\rho$, due to $\dist{}{v,C_x}>\rho$. Hence the length of $P$ is larger than~$2\rho$, witnessing that $\dist{}{u,v}>2\rho$.
    \end{claimproof}

    The algorithm now proceeds as follows: compute all the values $\Fkt{\Phi_x}{A}$ for all $x\in \V{T}$ and $A\in \Ii_x$ in a bottom-up manner using the formula of \cref{cl:transitionIS}, and output the value of $\distIS{2\rho}{G}$ as prescribed by~\eqref{eq:turtle}. There are $n^{\Oh{\Delta k}}$ values to compute and computing each of them takes time  $n^{\Oh{\Delta k}}$, hence the overall time complexity is $n^{\Oh{\Delta k}}$.
\end{proof}

\thmDistDS*
\begin{proof}
    Let $\Tt=(T,\Set{C_x\colon x\in \V{T}})$ be the given tree-partition-decomposition of~$G$. We use the same notation as in the proof of \cref{thm:distIS_xp}: for a set $S\subseteq \V{G}$ and a node $x\in \V{T}$, $S[x]$ denotes the subset of $S$ comprising all vertices of $S$ at distance at most $\rho$ from $C_x$. We have the following analogue of \cref{cl:is-small-vicinity}; note that it applies only to dominating sets of minimum size.

 \begin{claim}\label{cl:ds-small-vicinity}
       For every minimum-size distance-$\rho$ dominating set $D$ in $G$, we have
       \[|D[x]|\leq (\Delta^2+1)k\qquad\textrm{for all }x\in \V{T}.\]
    \end{claim}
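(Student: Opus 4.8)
The plan is to follow the template of the proof of \cref{cl:is-small-vicinity}. However, a globally minimum distance-$\rho$ dominating set need not be locally minimal in any immediate quantitative sense — and, unlike independent sets, the private vertices of two distinct dominators may lie within distance $\rho$ of each other — so the step that used distance-independence of $I[x]$ must be replaced by an exchange argument. Fix $x\in\V{T}$ and set $U\coloneqq\Set{v\in\V{G}\mid\dist{}{v,C_x}\le 2\rho}$. The reason for the radius $2\rho$ is that every vertex some $d\in D[x]$ can distance-$\rho$ dominate lies in $U$: if $d\in D[x]$ then $\dist{}{d,C_x}\le\rho$, so $\dist{}{v,d}\le\rho$ forces $\dist{}{v,C_x}\le 2\rho$.

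First I would show $|D[x]|\le\distDS{\rho}{U,G}$. Let $D_U$ be an arbitrary distance-$\rho$ dominating set of $U$, and consider $D'\coloneqq(D\setminus D[x])\cup D_U$. Then $D'$ distance-$\rho$ dominates $G$: every vertex of $U$ is dominated by $D_U$, while any $v\notin U$ is dominated in $D$ by some vertex $d$, which must satisfy $d\notin D[x]$ — otherwise $v\in U$ by the observation above — and hence $d\in D\setminus D[x]\subseteq D'$. By minimality of $D$ we get $|D|\le|D'|\le|D|-|D[x]|+|D_U|$, so $|D[x]|\le|D_U|$; since this holds for every such $D_U$, we conclude $|D[x]|\le\distDS{\rho}{U,G}$.

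It then remains to bound $\distDS{\rho}{U,G}$, which is the same tree-distance bookkeeping as in \cref{cl:is-small-vicinity}, carried one level further. For $v\in U$, fix $c\in C_x$ with $\dist{}{v,c}\le 2\rho$ and a shortest $v$-$c$-path; splitting it at a vertex at distance at most $\rho$ from each of $v$ and $c$, and applying the defining property of a distance-$\rho$ tree-partition-decomposition twice, shows that the bag $C_y$ containing $v$ satisfies $\dist{T}{x,y}\le 2$. In a tree of maximum degree at most $\Delta$ there are at most $1+\Delta+\Delta(\Delta-1)=\Delta^2+1$ nodes within distance $2$ of $x$; each corresponding bag is $(k,\rho)$-coverable because the decomposition has width at most $k$, so the union of their covers witnesses $\distDS{\rho}{U,G}\le(\Delta^2+1)k$. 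Combining with the previous step gives $|D[x]|\le(\Delta^2+1)k$. The one place requiring care is the exchange step: $U$ must be taken large enough (radius $2\rho$, not $\rho$) so that deleting $D[x]$ and inserting a dominating set of $U$ cannot leave any vertex of $\V{G}\setminus U$ undominated; everything else is routine.
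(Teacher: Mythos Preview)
Your proof is correct and takes essentially the same approach as the paper: an exchange argument showing that $D[x]$ can be replaced by a distance-$\rho$ dominating set of the region around $C_x$, which is $((\Delta^2+1)k,\rho)$-coverable because it lies in the union of the bags at tree-distance at most $2$ from $x$. The only cosmetic difference is that you define the region via graph distance ($U=\{v:\dist{}{v,C_x}\le 2\rho\}$) and then embed it into the tree-distance-$2$ neighbourhood, whereas the paper takes $R$ to be that neighbourhood directly; the substance of the argument is identical.
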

    \begin{claimproof}
        Since $\Tt$ is a distance-$\rho$ tree-partition-decomposition, we have \[D[x]\subseteq \bigcup \Set{C_y\colon y\in \V{T}, \dist{T}{x,y}\leq 1}.\] Further, any vertex that is distance-$\rho$ dominated by a vertex of $D[x]$ belongs to \[R\coloneqq \bigcup \Set{C_y\colon y\in \V{T}, \dist{T}{x,y}\leq 2}.\] Note that $R$ is the union of at most $\Delta(\Delta-1)+\Delta+1=\Delta^2+1$ parts $C_y$, and each of those parts is $(k,\rho)$-coverable by assumption. So $R$ is $(k(\Delta^2+1),\rho)$-coverable, meaning that there is $F\subseteq \V{G}$ with $|F|\leq k(\Delta^2+1)$ that distance-$\rho$ dominates~$R$. In particular, $D'\coloneqq (D\setminus D[x])\cup F$ is a distance-$\rho$ dominating set in $G$. By the minimality of $D$ we have $|D|\leq |D'|$, implying that $|D[x]|\leq |F|\leq k(\Delta^2+1)$.
    \end{claimproof}
    Again, we root $T$ in arbitrary node $r$ and for a node $x$, by $V_x$ we denote the union of the part $C_x$ and all the parts $C_y$ for $y$ ranging over the descendants of~$x$. We also define
    \begin{align*}
    	\Dd_x\coloneqq \{A\subseteq \V{G}\mid |A|\leq k (\Delta^2+1), A\textrm{ distance-}\rho\textrm{ dominates $C_x$, and\phantom{$\}\}.$}}\\
    	A\textrm{ is distance-}\rho\textrm{ dominated by }C_x\}.
    \end{align*}
    Again, for every node $x$ we have that $|\Dd_x|\leq n^{\Oh{\Delta^2 k}}$ and all the sets $\Dd_x$ can be computed in time~$n^{\Oh{\Delta^2 k}}$.
    
    Call a set $S\subseteq \V{G}$ \emph{thin} if $|S[x]|\leq k(\Delta^2+1)$ for each node $x$ of $T$. With these definitions, we can define the values we are going to compute by dynamic programming. For every node $x$ of $T$ and $A\in \Dd_x$, we define
    \begin{align*}
    	\Fkt{\Psi_x}{A}\coloneqq \min\{|W|\colon W\subseteq V_x,\dist{}{W,C_x}>\rho, A\cup W\textrm{ is thin, and \phantom{$\}\}\}\}\}\}\}\}\}\}\}.$.}}\\ A\cup W\textrm{ distance-}\rho\textrm{ dominates }V_x\}.
    \end{align*}
    As in the proof of \cref{thm:distIS_xp}, sets $W$ as in the formula above will be called \emph{extensions} of $A$.
    By \cref{cl:ds-small-vicinity}, we have 
    \begin{equation}\label{eq:beaver}
    	\distDS{\rho}{G} = \min \Set{|A|+\Fkt{\Psi_r}{A}\colon A\in \Dd_r},
    \end{equation}
    and $\Fkt{\Psi_x}{A}=0$ for all $A\in \Ii_x$ whenever $x$ is a leaf of $T$. So it remains to prove an analogue of \cref{cl:transitionIS}: how to compute the values $\Fkt{\Psi_x}{\cdot}$ by bottom-up dynamic programming. For this, we will use the following notion of compatibility: for distinct nodes $x,y\in \V{T}$, call $A\in \Dd_x$ and $B\in \Dd_y$ {\em{compatible}}~if
    \[B[x]=A\cap B=A[y].\]

    \begin{claim}\label{cl:transitionDS}
        For every $x\in \V{T}$ and $A\in \Dd_x$, we have
        \[\Fkt{\Psi_x}{A}=\sum_{y\colon \textrm{child of }x} \min \Set{|B\setminus A|+\Fkt{\Psi_y}{B}\colon B\in \Dd_y\textrm{ and }A\textrm{ and }B\textrm{ are compatible}}.\]
    \end{claim}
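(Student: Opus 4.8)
The plan is to prove \cref{cl:transitionDS} by the same two-directional argument as \cref{cl:transitionIS}, adapting it from independence to domination. Let $M$ denote the right-hand side of the claimed equality. For the inequality $\Fkt{\Psi_x}{A}\leq M$, I would fix for every child $y$ of $x$ a set $B_y\in\Dd_y$ compatible with $A$ together with an extension $W_y$ of $B_y$ minimising $|B_y\setminus A|+\Fkt{\Psi_y}{B_y}$, and set $W\coloneqq\bigcup_{y}\bigl((B_y\setminus A)\cup W_y\bigr)$. I then check that $W$ is an extension of $A$: membership $W\subseteq V_x$ and $\dist{}{W,C_x}>\rho$ follow from compatibility and the tree-partition property exactly as before; thinness of $A\cup W$ follows because, for any node $z$ of $T$, the vertices of $(A\cup W)[z]$ at distance $\leq\rho$ from $C_z$ must lie in parts within tree-distance $1$ of $z$, and one argues (using the tree-partition property together with the thinness of each $A\cup B_y$ relative to the appropriate subtree) that $(A\cup W)[z]$ is contained in one of the already-thin pieces $A\cup B_{y}$ or $B_y\cup W_y$; and the domination requirement holds because $V_x=C_x\cup\bigcup_y V_y$, $C_x$ is already distance-$\rho$ dominated by $A$, and each $V_y$ is distance-$\rho$ dominated by $B_y\cup W_y\subseteq A\cup W$ — here I use that a vertex of $V_y$ at distance $\leq\rho$ from $C_x$ lies in $V_y[x]$, which by compatibility is $A\cap B_y\subseteq A$, while a vertex of $V_y$ at distance $>\rho$ from $C_x$ can only be dominated from within $V_y$.

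For the reverse inequality $\Fkt{\Psi_x}{A}\geq M$, I would take a minimum-size extension $W^\star$ of $A$, set $D^\star\coloneqq A\cup W^\star$, and for each child $y$ of $x$ define $B_y\coloneqq D^\star[y]$ and $W_y\coloneqq (D^\star\cap V_y)\setminus B_y$. One verifies from the definitions that $\{\,B_y\setminus A,\ W_y\colon y\text{ child of }x\,\}$ is a partition of $W^\star$ (using that any vertex of $W^\star$ lies in exactly one $V_y$, and that $B_y\cap A=D^\star[y]\cap A=A[y]$ by the tree-partition property, so $B_y\setminus A$ is the portion of $D^\star[y]$ not already in $A$); that each $B_y\in\Dd_y$ (bounded size because $D^\star$ is thin, distance-$\rho$ dominates $C_y$ because $D^\star$ does and a dominator of a vertex of $C_y$ lies within distance $\rho$ of $C_y$, hence in $D^\star[y]=B_y$, and is distance-$\rho$ dominated by $C_y$ trivially since $B_y\subseteq D^\star[y]$); that $A$ and $B_y$ are compatible ($B_y[x]=A\cap B_y=A[y]$, again from the tree-partition property); and that $W_y$ is an extension of $B_y$ (it sits in $V_y$, is at distance $>\rho$ from $C_y$ by construction, $B_y\cup W_y=D^\star\cap V_y$ is thin as a subset of the thin set $D^\star$, and $B_y\cup W_y$ distance-$\rho$ dominates $V_y$ because a vertex of $V_y$ is dominated by $D^\star$ from some vertex which, lying within distance $\rho$ of that vertex, is contained in $D^\star\cap V_y$). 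Then $\Fkt{\Psi_x}{A}=|W^\star|=\sum_y\bigl(|B_y\setminus A|+|W_y|\bigr)\geq\sum_y\bigl(|B_y\setminus A|+\Fkt{\Psi_y}{B_y}\bigr)\geq M$.

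With \cref{cl:transitionDS} in hand, the algorithm mirrors the one for \cref{thm:distIS_xp}: compute all values $\Fkt{\Psi_x}{A}$ bottom-up over the rooted tree $T$ using the recurrence, then read off $\distDS{\rho}{G}$ via \eqref{eq:beaver}. There are $n^{\Oh{\Delta^2 k}}$ pairs $(x,A)$, and evaluating the recurrence at each pair requires, for each of the at most $\Delta$ children, a minimisation over $B\in\Dd_y$ with a compatibility check, which takes $n^{\Oh{\Delta^2 k}}$ time, so the total running time is $n^{\Oh{\Delta^2 k}}$ as claimed.

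I expect the main obstacle to be the correct bookkeeping for the \emph{thinness} condition, which has no counterpart in the independence-number proof: one must argue both that the reassembled set $A\cup W$ in the $\leq$ direction stays thin at every node of $T$ (not merely at nodes near $x$), and that the pieces $B_y\cup W_y$ extracted in the $\geq$ direction inherit thinness. The key point making this go through is that, by the distance-$\rho$ tree-partition property, $(A\cup W)[z]$ only ever sees parts in the closed neighbourhood of $z$ in $T$, so it is always a subset of $A\cup B_{y_0}$, of $B_{y_0}\cup W_{y_0}$, or of an extension's ambient thin set for a single relevant child $y_0$ (or of $A$ itself when $z$ is $x$ or above), each of which is thin by hypothesis; once this localisation is spelled out, everything else is a routine translation of the distance arguments from the proof of \cref{cl:transitionIS}.
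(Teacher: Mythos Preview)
Your proposal follows the same two-directional argument as the paper and is essentially correct, but there are a few technical slips worth flagging.

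In the $\leq$ direction, the parenthetical ``here I use that a vertex of $V_y$ at distance $\leq\rho$ from $C_x$ lies in $V_y[x]$, which by compatibility is $A\cap B_y$'' is both unnecessary and false: compatibility says $B_y[x]=A\cap B_y$, not $V_y[x]=A\cap B_y$. You don't need this at all --- since $W_y$ was chosen as an extension of $B_y$, by definition $B_y\cup W_y$ already distance-$\rho$ dominates $V_y$, and $B_y\cup W_y\subseteq A\cup W$; that is the whole argument.

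In the $\geq$ direction, two statements are not quite right. First, ``$B_y\cup W_y=D^\star\cap V_y$'' fails because $B_y=D^\star[y]$ may contain vertices of $C_x$ (namely those in $A[y]$), which are not in $V_y$; but thinness only needs $B_y\cup W_y\subseteq D^\star$, which does hold. Second, and more substantively, your domination justification ``a vertex of $V_y$ is dominated by $D^\star$ from some vertex which \ldots\ is contained in $D^\star\cap V_y$'' is incomplete: if $v\in C_y$, its dominator $u\in D^\star$ might lie in $C_x$, hence outside $V_y$. The fix is to split cases as the paper does: for $v\in C_y$ the dominator lies in $D^\star[y]=B_y$; for $v$ in a strict descendant part $C_{y'}$ the dominator lies in $D^\star[y']\subseteq D^\star\cap V_y\subseteq B_y\cup W_y$.
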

    \begin{claimproof}
        Again, by $M$, we denote the right-hand side of the formula. We prove the equality $\Fkt{\Psi_x}{A}=M$ by arguing the two inequalities.

        For the inequality $\Fkt{\Psi_x}{A}\geq M$, let $W^\star$ be a minimum-size extension of $A$; so $|W^\star|=\Fkt{\Psi_x}{A}$. Also, let $I^\star\coloneqq A\cup W^\star$. For each child $y$ of $x$, define
        \[B_y\coloneqq I^\star[y]\qquad\textrm{and}\qquad W_y\coloneqq (I^\star\cap V_y)\setminus B_y.\]
        It follows immediately from the definitions and the assumption that $\Tt$ is a distance-$\rho$ tree-partition-decomposition of $G$ that
        \begin{itemize}
            \item $\Set{B_y\setminus A,W_y\colon y\textrm{ is a child of }x}$ is a partition of $W_x$; and
            \item for each child $y$ of $x$, $B_y\in \Dd_y$ and $A$ and $B_y$ are compatible. 
        \end{itemize}
        We are left with verifying that for each child $y$ or $x$, $W_y$ is an extension of $B_y$. Note that if we argue this, then we have
        \[\Fkt{\Psi_x}{A}=|W^\star|=\sum_{y\colon \textrm{child of }x} |B_y\setminus A|+|W_y|\geq M,\]
        as required.

        Consider  any child $y$ of $x$. From the definitions it follows that $W_y\subseteq V_y$ and $\dist{}{W_y,C_y}>\rho$, and the thinness of $B_y\cup W_y$ follows from the thinness of $A\cup W$. We only need to argue that $B_y\cup W_y$ distance-$\rho$ dominates $V_y$. First, note that since $I^\star$ distance-$\rho$ dominates $V_x$, we have that $B_y=I^\star[y]$ distance-$\rho$ dominates $C_y$. Similarly, for every strict descendant $y'$ of $y$, the set $I^\star[y']$ distance-$\rho$ dominates $C_{y'}$. Since $\Tt$ is a distance-$\rho$ tree-partition-decomposition, we have $I^\star[y']\subseteq \bigcup_{y''\colon \dist{T}{y'}{y''}\leq 1} C_{y''}$, which in particular implies that $I^\star[y']\subseteq I^\star\cap V_y\subseteq B_y\cup W_y$. All in all, $B_y\cup W_y$ distance-$\rho$ dominates the whole~$V_y$, as required. This settles the inequality $\Fkt{\Psi_x}{A}\geq M$.
   
        For the converse inequality $\Fkt{\Psi_x}{A}\leq M$, for every child $y$ of $x$ we fix a set $B_y\in \Dd_y$ compatible with $A$ and an extension $W_y$ of $B_y$ that minimize $|B_y\setminus A|+|W_y|$; so $M=\sum_{y\colon \textrm{child of }x} |B_y\setminus A|+|W_y|$. We now observe~that
        \[W\coloneqq \bigcup_{y \colon \textrm{child of }x} (B_y\setminus A)\cup W_y\]
        is an extension of $A$; this proves that $\Fkt{\Psi_x}{A}\leq M$.
        Indeed, that $W\subseteq V_x$, $\dist{}{W,C_x}>\rho$, and $A\cup W$ is thin follows directly from the compatibility of $A$ and each $B_y$ and the thinness of each set $B_y\cup W_y$. Further, $A$ distance-$\rho$ dominates $C_x$ while $B_y\cup W_y$ distance-$\rho$ dominates  $V_y$, for each child $y$ of $x$. Hence, $A\cup W=A\cup \bigcup_{y\colon \textrm{child of }x} (B_y\cup W_y)$ together distance-$\rho$ dominates the whole~$V_x$. So $W$ is indeed an extension of $A$ and we are done.
    \end{claimproof}

As in the proof of \cref{thm:distIS_xp}, the algorithm proceeds as follows: compute the values $\Fkt{\Psi_x}{A}$ for all $x\in \V{T}$ and $A\in \Dd_x$ in a bottom-up manner, and output the value of $\distDS{\rho}{G}$ as stipulated by~\eqref{eq:beaver}. There are $n^{\Oh{\Delta^2 k}}$ values to compute and computing each of them takes time  $n^{\Oh{\Delta^2 k}}$, hence the overall time complexity is $n^{\Oh{\Delta^2 k}}$.
\end{proof}

\bibliographystyle{alphaurl}
\bibliography{literature}

%
%
%

\end{document}